\def\polar{\theta}
\def\nuint{\widetilde{\nu}}
\newcommand*{\jump}[1]{\Theta_{#1}}
\newcommand{\A}{\boldsymbol{A}}
\newcommand{\B}{\boldsymbol B}
\newcommand{\bsmall}{\boldsymbol b}
\newcommand{\F}{F}
\newcommand{\f}{f}
\newcommand{\mean}[1]{\,-\hskip-1.08em\int_{#1}} %media integrale displayed
\def\R{\mathbb{R}}
\def\N{\mathbb{N}}
\def\Z{\mathbb{Z}}
\def\DM{{\mathcal{DM}^{\infty}}}
\newcommand*{\DMloc}[1][\Omega]{\mathcal{DM}^{\infty}_{\rm loc}{(#1)}}
\newcommand*{\BVLloc}[1][\Omega]{BV_{{\rm loc}}(#1)\cap L^{\infty}_{{\rm loc}}{(#1)}}
\DeclareMathOperator{\Tr}{Tr}
\newcommand*{\Trace}[3][\pm]{\Tr^{#1}(#2, #3)}
\newcommand*{\Trp}[2]{\Trace[+]{#1}{#2}}
\newcommand*{\Trm}[2]{\Trace[-]{#1}{#2}}
\def\v{{\bf v}}
\def\Xi{\chi^*_{[a_i, a_{i+1}]}}
\DeclareMathOperator{\Div}{div}
\newcommand{\defeq} {:=}
\newcommand{\medint}{-\kern  -,375cm\int}
\newcommand{\medintinrigo}{-\kern  -,315cm\int}
\newcommand{\eps}{\varepsilon}
 \newcommand{\hh}{{\mathcal H}^{N-1}}
\newcommand{\LLN}{{\mathcal L}^N}
\newcommand{\M}[1]{\mathcal{#1}}    %al posto di \mathcal argomento trra graffe!!!%
\renewcommand{\H}{\M{H}}
\newcommand{\res}{\mathop{\hbox{\vrule height 7pt width .5pt depth 0pt
\vrule height .5pt width 6pt depth 0pt}}\nolimits} % macro per la restrizione
\def\pscal#1#2{\left\langle #1\,,\, #2 \right\rangle}
\DeclareMathOperator{\sign}{sign}
\def\ut{\widetilde{u}}
\newtheorem{definition}{Definition}[section]
\newtheorem{lemma}[definition]{Lemma}
\newtheorem{theorem}[definition]{Theorem}
\newtheorem{proposition}[definition]{Proposition}
\newtheorem{corollary}[definition]{Corollary}
\theoremstyle{remark}
\newtheorem{remark}[definition]{Remark}
\def\@settitle{\begin{center}%
		\baselineskip14\p@\relax
		\bfseries
		\uppercasenonmath\@title
		\@title
		\ifx\@subtitle\@empty\else
		%\\[1ex]\uppercasenonmath\@subtitle
		%\footnotesize\mdseries\@subtitle
		\\[5ex]%\@subtitle
		\normalsize\mdseries\@subtitle
		\fi
	\end{center}%
}
\def\subtitle#1{\gdef\@subtitle{#1}}
\def\@subtitle{}
\begin{document}
\title[An extension of the pairing theory]
{An extension of the pairing theory \\ between divergence--measure fields and BV functions}

\author[G.~Crasta]{Graziano Crasta}
\address{Dipartimento di Matematica ``G.\ Castelnuovo'', Univ.\ di Roma I\\
	P.le A.\ Moro 2 -- I-00185 Roma (Italy)}
\email{crasta@mat.uniroma1.it}
\author[V.~De Cicco]{Virginia De Cicco}
\address{Dipartimento di Scienze di Base  e Applicate per l'Ingegneria, Univ.\ di Roma I\\
	Via A.\ Scarpa 10 -- I-00185 Roma (Italy)}
\email{virginia.decicco@sbai.uniroma1.it}

\keywords{Anzellotti's pairing, divergence--measure fields,
Gauss--Green formula}
\subjclass[2010]{28B05,46G10,26B30}

\date{April 15, 2018}

\begin{abstract}
In this paper we introduce a nonlinear version of the notion of Anzellotti's pairing
between divergence--measure vector fields and functions of bounded variation,
motivated by possible applications to
evolutionary quasilinear problems.
As a consequence of our analysis, we prove a generalized Gauss--Green formula.
\end{abstract}

\maketitle

\section{Introduction}

In recent years the pairing theory between divergence--measure vector fields and BV functions,
initially developed by Anzellotti \cite{Anz,Anz2},
has been extended to more general situations
(see e.g.\ \cite{ChenFrid,ChFr1,ChTo2,ChTo,ChToZi,ComiPayne,CDC3,LeoSar2,SchSch,SchSch2}
and the references therein).
These extensions are motivated, among others, 
by applications to 
hyperbolic conservation laws and transport equations \cite{AmbCriMan,ADM,ChenFrid,ChTo2,ChTo,ChToZi,CDC1,CDC2,CDD,CDDG}, 
problems involving the $1$-Laplace operator
\cite{AVCM,K1},
the prescribed mean curvature problem
\cite{LeoSar,LeoSar2}
and to lower semicontinuity problems in BV \cite{BouDM,DCL,DCFV2}.

Another major related result
concerns the Gauss--Green formula
and its applications
(see e.g.\ \cite{Anz,Cas,ComiPayne,CoTo,CDC3,LeoSar2}).

Let us describe the problem in more details.
Let $\DM$ denote the class of
bounded divergence--measure vector fields \(\A\colon\R^N \to \R^N\), 
i.e.\ the vector fields with the properties 
that $\A$ is bounded and
\(\Div \A\) is a finite Radon measure.
If \(\A\in\DM\) and \(u\) is a function of bounded variation with precise representative \(u^*\), 
then Chen and Frid \cite{ChenFrid} proved that
\[
\Div(u\A) = u^* \Div\A + \mu,
\]
where $\mu$ is a Radon measure, absolutely continuous
with respect to $|Du|$.
This measure $\mu$ has been denoted by Anzellotti \cite{Anz}
with the symbol $(\A, Du)$ and by 
Chen and Frid with $\overline{\A\cdot Du}$,
and it is called the pairing between the divergence--measure
field $\A$ and the gradient of the $BV$ function $u$.
The characterization of the decomposition of this measure into absolutely continuous,
Cantor and jump parts has been
studied in \cite{ChenFrid,CDC3}.
In particular, the analysis of the jump part
has been considered in \cite{CDC3} using the notion
of weak normal traces of a divergence--measure vector
field on oriented countably $\H^{N-1}$-rectifiable sets
given in \cite{AmbCriMan}.

In view to applications to evolutionary quasilinear problems,
our aim is to extend the pairing theory
from the product $u\A$
to the mixed case $\B(x, u)$.
Our main assumptions on $\B$ are
that $\B(\cdot, w)\in\DM$ for every $w\in\R$,
$\B(x, \cdot)$ is of class $C^1$
and
the least upper bound
\[
\sigma\defeq \bigvee_{w\in \R} |\Div_x \partial_w\B(\cdot,w)|
\]
is a Radon measure.
(See Section \ref{ss:assumptions} for the complete
list of assumptions.)
We remark that, in the case $\B(x,w) = w\, \A(x)$,
with $\A\in\DM$,
these assumptions are automatically satisfied
with $\sigma = |\Div\A|$. 
More precisely, 
we will prove that,
if $u\in BV\cap L^\infty$, then
the composite function
$\v(x) := \B(x,u(x))$ belongs to $\DM$
and
\[
\Div[\B(x,u(x))]
= 
\frac{1}{2}\left[
F(x, u^+(x)) + F(x, u^-(x))
\right]\, \sigma
+ \mu,
\]
in the sense of measures.
Here
$F(\cdot, w)$ denotes the Radon--Nikod\'ym derivative
of the measure $\Div_x \B(\cdot, w)$
with respect to $\sigma$,
$u^\pm$ are the approximate limits of $u$
and $\mu \equiv (\partial_w \B(\cdot , u), Du)$ is again a Radon measure, absolutely
continuous with respect to $|Du|$
(see Theorem~\ref{chainb4}).
We recall that the analogous chain rule for BV vector fields has been
proved in \cite{ACDD}.
Notice that, when $\B(x, w) = w\, \A(x)$, then $\mu = (\A,Du)$
is exactly the Anzellotti's pairing between $\A$ and $Du$.
Even for general vector fields $\B(x,w)$
we can prove the following characterization of the decomposition 
$\mu=\mu^{ac}+\mu^c+\mu^j$
of this measure into absolutely continuous,
Cantor and jump parts
(see Theorem~\ref{chainb5}):
\begin{gather*}
\mu^{ac}=\pscal{\partial_w\B(x, u(x))}{\nabla u(x)}\mathcal L^N,
\\
\mu^{c}=\pscal{\widetilde{\partial_w\B}(x, \ut(x))}{D^c u}\,,
\\
\mu^j=
[\beta^*(x,u^+(x))
-\beta^*(x,u^-(x))]
\hh\res {J_u},
\end{gather*}
where, for every $w\in\R$, $\beta^\pm(\cdot, w)$ are the normal traces of $\B(\cdot, w)$
on $J_u$ and $\beta^*(\cdot, w) := [\beta^+(\cdot, w) + \beta^-(\cdot, w)]/2$,
and $\mu^c$ is 
the Cantor part of the measure.
We remark that, to prove the representation formula for $\mu^c$,
we need an additional technical assumption (see \eqref{f:assumpDc} in Theorem \ref{chainb5}).

We recall that a similar characterization
of $\Div[\B(x,u(x))]$ has been proved
in \cite{CDC2} under stronger assumptions,
including the existence of the strong traces
of $\B(\cdot, w)$.

As a consequence of our analysis 
we prove that, if
\(E \subset \R^N\) is a bounded set with finite perimeter,
then the following Gauss--Green formula holds:
\[
	\int_{E^1} \frac{ F(x, u^+(x)) + F(x, u^-(x)) }{2}\, d\sigma(x)
	+ \mu(E^1) =
	-\int_{\partial ^*E} \beta^+(x,u^+(x)) \ d\mathcal H^{N-1}\,,
\]
where $E^1$ is the measure theoretic interior of $E$,
and $\partial^* E$ is the reduced boundary of $E$
(see Theorem \ref{t:GG}).
In the particular case $\B(x, w) = w\, \A(x)$, the
analogous formula has been proved in
\cite[Theorem 5.1]{CDC3}. 
We recall that the map $x\mapsto \beta^+(x,u^+(x))$
in the last integral coincides with the interior weak normal
trace of the vector field $x\mapsto \B(x, u(x))$
on $\partial^* E$
(see Proposition~\ref{p:traces}).

\medskip
The plan of the paper is the following.
In Section~\ref{s:prel} we recall some known results 
on functions of bounded variation, 
divergence--measure vector fields
and their normal traces,
and the Anzellotti's pairing.

In Section~\ref{s:div}
we list the assumptions on $\B$ and
we prove a number of its basic properties,
including some regularity result of the weak normal traces.
Finally, we prove that for every $u\in BV\cap L^\infty$,
the composite function $x\mapsto \B(x, u(x))$ belongs to
$\DM$.

In Section \ref{ss:div2} we prove our main results
on the pairing measure $\mu$ and its properties.

Finally, in Section \ref{s:gluing} we describe
some gluing construction and we prove an extension theorem
that will be used in Section~\ref{s:green}
to prove a Gauss--Green formula for weakly
regular domains.

\section{Preliminaries}\label{s:prel}

In the following, \(\Omega\) will always denote a nonempty open subset of \(\R^N\).

Let \(u\in L^1_{{\rm loc}}(\Omega)\).
We say that \(u\) has an approximate limit at $x_{0}\in\Omega$ 
if there exists \(z\in\R\) such that
\begin{equation}
	\label{f:apcont}
	\lim_{r\rightarrow0^{+}}\frac{1}{\LLN\left(  B_r(x_0)\right)}\int_{B_r\left(  x_{0}\right)
	}\left|  u(x)  -z  \right|  \,dx=0.
\end{equation}
The set \(S_u\subset\Omega\) of points where this property does not hold is called
the approximate discontinuity set of \(u\).
For every \(x_0\in\Omega\setminus S_u\), the number \(z\), uniquely determined by
\eqref{f:apcont}, is called the approximate limit of \(u\) at \(x_0\)
and denoted by \(\ut(x_0)\).

We say that \(x_0\in\Omega\) is an approximate jump point of \(u\) if
there exist \(a,b\in\R\) and a unit vector \(\nu\in\R^n\) such that \(a\neq b\)
and
\begin{equation}\label{f:disc}
	\begin{gathered}
		\lim_{r \to 0^+} \frac{1}{\LLN(B_r^+(x_0))}
		\int_{B_r^+(x_0)} |u(y) - a|\, dy = 0,
		\\
		\lim_{r \to 0^+} \frac{1}{\LLN(B_r^-(x_0))}
		\int_{B_r^-(x_0)} |u(y) - b|\, dy = 0,
	\end{gathered}
\end{equation}
where \(B_r^\pm(x_0) := \{y\in B_r(x_0):\ \pm (y-x_0)\cdot \nu > 0\}\).
The triplet \((a,b,\nu)\), uniquely determined by \eqref{f:disc} 
up to a permutation
of \((a,b)\) and a change of sign of \(\nu\),
is denoted by \((u^+(x_0), u^-(x_0), \nu_u(x_0))\).
The set of approximate jump points of \(u\) will be denoted by \(J_u\).

The notions of approximate discontinuity set, approximate limit and approximate jump point 
can be obviously extended to the vectorial case
(see \cite[\S 3.6]{AFP}).

In the following we shall always extend the functions $u^\pm$ to
\(\Omega\setminus(S_u\setminus J_u)\) by setting
\[
u^\pm \equiv \widetilde{u}\ \text{in}\ \Omega\setminus S_u.
\]

\begin{definition}[Strong traces]\label{def:tracce}
	Let $u\in L^\infty_{\rm loc}(\R^{N})$ and  
	let $\mathcal J\subset \R^{N}$ be a countably $\H^{N-1}$-rectifiable set oriented by a normal vector
	field $\nu$. 
	We say that two Borel functions \(u^{\pm}\colon\mathcal{J}\to\R\)
	are the strong traces of \(u\) on \(\mathcal{J}\)
	if for \(\H^{N-1}\)-almost every \(x\in\mathcal{J}\)
	it holds
	\[
	\lim_{r\to 0^+} \int_{B_r^\pm(x)} |u(y) - u^\pm(x)|\, dy = 0,
	\]
	where
	$B_r^\pm(x) :=  B_r(x) \cap \{ y\in\R^{N} : \pm \langle y-x,\nu(x)\rangle \geq 0 \}$.
\end{definition}

Here and in the following we will denote by \(\rho \in C^\infty_c(\R^N)\) a
symmetric convolution kernel with support in the unit ball,
and by \(\rho_{\varepsilon}(x) := \varepsilon^{-N} \rho(x/\varepsilon)\).

In the sequel we will use often the following result.

\begin{proposition}\label{lolol}
	Let \(u\in L^1_{{\rm loc}}(\Omega)\)
	and define
	\[
	u_{\varepsilon}\left(  x\right) = \rho_{\varepsilon} \ast u (x) :=\int_{\Omega}\rho_{\varepsilon}\left(
	x-y\right)  \,u\left(  y\right)  \,dy.
	\]
	If \(x_0\in\Omega\setminus S_u\), then \(u_\eps(x_0) \to \ut(x_0)\)
	as \(\eps\to 0^+\).
	%
	%\begin{enumerate}
	%%\item[(i)] If $x_{0}\in\mathbb{R}^{N}$ is a Lebesgue point for $u$, then
	%%$u_{\varepsilon}\left(  x_{0}\right)  \rightarrow u\left(  x_{0}\right)  $ as
	%%$\varepsilon\rightarrow0^{+}.$
	%\item[(i)]
	%If \(x_0\in\Omega\setminus S_u\), then \(u_\eps(x_0) \to \ut(x_0)\)
	%as \(\eps\to 0^+\).
	%
	%\item[(ii)] If $u$ is differentiable at $x_{0}\in\mathbb{R}^{N}$, then $\nabla
	%u_{\varepsilon}\left(  x_{0}\right)  \rightarrow\nabla u\left(  x_{0}\right)
	%$ as $\varepsilon\rightarrow0^{+}.$
	%\end{enumerate}
\end{proposition}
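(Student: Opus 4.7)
The plan is to reduce the claim directly to the defining property \eqref{f:apcont} of the approximate limit by a simple pointwise bound on the mollifier.

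Let $z := \widetilde u(x_0)$ be the approximate limit of $u$ at $x_0$. Fix $\varepsilon > 0$ small enough that $B_\varepsilon(x_0)\subset \Omega$ (possible since $x_0\in\Omega$). Using that $\rho$ is a convolution kernel with $\int_{\R^N}\rho\,dx = 1$, hence $\int_{\R^N}\rho_\varepsilon\,dx = 1$, we can write
\[
u_\varepsilon(x_0) - z = \int_{B_\varepsilon(x_0)} \rho_\varepsilon(x_0 - y)\bigl(u(y) - z\bigr)\,dy,
\]
so that, bounding $\rho_\varepsilon$ by its sup norm,
\[
|u_\varepsilon(x_0) - z|
\le \|\rho\|_\infty\,\varepsilon^{-N}\int_{B_\varepsilon(x_0)} |u(y)-z|\,dy
= C\,\frac{1}{\Leb{N}(B_\varepsilon(x_0))}\int_{B_\varepsilon(x_0)} |u(y)-z|\,dy,
\]
with $C = \|\rho\|_\infty\,\omega_N$ depending only on $\rho$ and the dimension.

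The right-hand side tends to $0$ as $\varepsilon\to 0^+$ by the very definition \eqref{f:apcont} of approximate limit at $x_0$, and this yields $u_\varepsilon(x_0)\to\widetilde u(x_0)$.

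There is no real obstacle: the only mild point of care is the use of $\int\rho_\varepsilon = 1$, which requires that the whole ball $B_\varepsilon(x_0)$ be contained in $\Omega$ (so that the constant $z$ can be pulled out of a full integral against $\rho_\varepsilon$), and this is harmless because we only need the convergence as $\varepsilon\to 0^+$ and $x_0$ is an interior point of $\Omega$.
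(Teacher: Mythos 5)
Your argument is correct and complete: the paper states Proposition~\ref{lolol} without proof (it is a standard fact, cf.\ \cite[Prop.~3.64]{AFP}-type results), and your reduction to the definition \eqref{f:apcont} via the bound $\rho_\varepsilon \le \|\rho\|_\infty \varepsilon^{-N}$ on the support $B_\varepsilon(x_0)$ is exactly the expected one-line estimate. You also correctly handle the only delicate point, namely that $\int_\Omega \rho_\varepsilon(x_0-y)\,dy=1$ only once $B_\varepsilon(x_0)\subset\Omega$, which is harmless for the limit $\varepsilon\to 0^+$.
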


\begin{proposition} %(Proposition 2.5 in \cite{dcl})
	\label{p:ScorzaDragoni}
	Let $E$ be a Lebesgue measurable subset of ${\mathbb{R}}^{N}$ 
	and $G$ a Borel subset of ${\mathbb{R}}^M$. 
	Let $g\colon E\times G\rightarrow\mathbb{R}$ be a Borel function such that 
	for $\mathcal{L}^{N}$--a.e.\ $x\in$ $E$ the function 
	$g\left(  x,\cdot\right)$ is continuous on $G$.
	Then there exists an $\mathcal{L}^{N}$-null set $\mathcal{M}\subset
	\mathbb{R}^{N}$ such that for every $t\in G$ the function
	$g(\cdot,t)$ is approximately continuous in $E\setminus\mathcal{M}$.
\end{proposition}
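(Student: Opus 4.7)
The plan is to combine the classical Scorza--Dragoni theorem with the Lebesgue density theorem.

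Since $g$ is Borel and $g(x,\cdot)$ is continuous on $G$ for $\LLN$-a.e.\ $x\in E$, the Scorza--Dragoni theorem yields, for every $n\in\N$, a closed set $K_n\subset E$ with $\LLN(E\setminus K_n)<1/n$ such that the restriction $g|_{K_n\times G}$ is jointly continuous. Replacing $K_n$ with $\bigcup_{j\le n} K_j$, we may assume the sequence is increasing, so that $\LLN\bigl(E\setminus\bigcup_n K_n\bigr)=0$.

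Next, let $K_n^*$ denote the set of points of Lebesgue density one of $K_n$, so that $\LLN(K_n\setminus K_n^*)=0$, and define
$$
\mathcal M := E\setminus\bigcup_{n\in\N}K_n^*.
$$
Since $E\setminus\bigcup_n K_n^*\subset \bigl(E\setminus\bigcup_n K_n\bigr)\cup\bigcup_n(K_n\setminus K_n^*)$, the set $\mathcal M$ is $\LLN$-negligible.

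Now fix $x_0\in E\setminus\mathcal M$ and $t\in G$. Then $x_0\in K_n^*$ for some $n$, and the joint continuity of $g$ on $K_n\times G$ gives, for every $\eta>0$, some $r_0>0$ with $|g(x,t)-g(x_0,t)|<\eta$ for all $x\in K_n\cap B_{r_0}(x_0)$. Hence, for $0<r<r_0$,
$$
\{x\in B_r(x_0):|g(x,t)-g(x_0,t)|>\eta\}\subset B_r(x_0)\setminus K_n,
$$
and the right-hand side has vanishing relative Lebesgue measure at $x_0$ because $x_0$ is a density point of $K_n$. This is exactly the definition of approximate continuity of $g(\cdot,t)$ at $x_0$, and Proposition \ref{p:ScorzaDragoni} follows.

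The main conceptual point is the need for a single null set $\mathcal M$ valid for \emph{every} $t\in G$ simultaneously: for a fixed $t$ the Borel function $g(\cdot,t)$ is approximately continuous $\LLN$-a.e.\ by the Lebesgue differentiation theorem, but an uncountable union of the corresponding exceptional sets need not be negligible. Scorza--Dragoni supplies precisely the required uniformity. A minor technical caveat is invoking the theorem in the present generality, where $G$ is merely a Borel subset of $\R^M$ rather than a compact space; this is standard, e.g.\ by exhausting $G$ with an increasing sequence of bounded Borel pieces and applying a diagonal argument.
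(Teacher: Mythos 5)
Your proof is correct and follows essentially the same route as the paper: Scorza--Dragoni to get closed sets $K_n$ of nearly full measure on which $g$ is jointly continuous, then the Lebesgue density theorem at points of $K_n^*$ to convert continuity of the restriction into approximate continuity, with the same inclusion $\{|g(\cdot,t)-g(x_0,t)|>\eta\}\cap B_r(x_0)\subset B_r(x_0)\setminus K_n$. The only cosmetic difference is that you take $\bigcup_n K_n^*$ where the paper takes $\bigcup_n (K_n\cap K_n^*)$; these coincide because each $K_n$ is closed, so $K_n^*\subset K_n$, which is what legitimizes your appeal to continuity of $g|_{K_n\times G}$ at $(x_0,t)$.
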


\begin{proof}
	(See the proof of Theorem~1.3, p.~539 in \cite{FonLeo}.)
	By the Scorza--Dragoni theorem
	(see \cite[Theorem 6.35]{FonLeoBook}), for every \(i\in\Z^+\) there exists
	a closed set \(K_i \subset E\), with \(\LLN(E\setminus K_i) < 1/i\), 
	such that the restriction of \(g\) to \(K_i\times G\) is continuous.
	Let \(K_i^*\) be the set of 
	points with density \(1\) of \(K_i\),
	%Lebesgue points of \(K_i\), 
	and define
	\[
	\mathcal{E} := \bigcup_{i=1}^\infty (K_i \cap K_i^*).
	\]
	Since \(\LLN(K_i) = \LLN(K_i\cap K_i^*)\), it holds
	\[
	\LLN(E \setminus \mathcal{E}) \leq \LLN(E \setminus K_i) \leq \frac{1}{i}
	\qquad \forall i\in \Z^+,
	\]
	and so \(\LLN(E \setminus \mathcal{E}) = 0\).
	
	Let us fix \(t\in G\) and let us prove that the function
	\(u(x) := g(x, t)\) is 
	approximately continuous on \(\mathcal{E}\).
	Let \(\varepsilon > 0\) and \(x_0\in \mathcal{E}\).
	By definition of \(\mathcal{E}\), there exists an index
	\(i\in\Z^+\) such that \(x_0 \in K_i \cap K_i^*\).
	Since the restriction of \(u\) to \(K_i\) is continuous,
	there exists \(\delta > 0\) such that
	\[
	|u(x) - u(x_0)| < \varepsilon
	\qquad
	\forall x \in B_\delta(x_0) \cap K_i.
	\]
	As a consequence, for every \(r \in (0, \delta)\), it holds
	\[
	\LLN(B_r(x_0) \cap \{x\in E:\ |u(x) - u(x_0)| \geq \varepsilon\})
	\leq \LLN(B_r(x_0) \setminus K_i),
	\]
	hence the conclusion follows since \(x_0\) is a Lebesgue point of \(K_i\).
\end{proof}

\begin{corollary}\label{mmmmm}
	Let $E$ be a Lebesgue measurable subset of ${\mathbb{R}}^{N}$ 
	and $G$ a Borel subset of $\mathbb{R}^M$.
	Let $g\colon E\times G\rightarrow\mathbb{R}$ be a Borel function such that 
	for $\mathcal{L}^{N}$--a.e.\ $x\in$ $E$ the function $g\left(  x,\cdot\right)  $ is continuous on $G$.
	Let us define
	\[
	g_{\varepsilon}\left(  x, t\right)  :=\int_{\R^N}\rho_{\varepsilon}\left(
	x-y\right)  \,g\left(  y,t\right)  \,dy.
	\]
	Then there exists an $\mathcal{L}^{N}$-null set 
	$Z\subset\mathbb{R}^{N}$ such that for every $t\in G$ and for every $x\in E\setminus Z$ 
	we have
	$g_{\varepsilon}\left(  x_{0},t\right)  \rightarrow g\left(  x_{0},t\right)  $, as
	$\varepsilon\rightarrow0^{+}$.
\end{corollary}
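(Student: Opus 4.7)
The plan is to combine Proposition~\ref{p:ScorzaDragoni} with Proposition~\ref{lolol}. The key observation is that the null set $\mathcal{M}$ produced by Proposition~\ref{p:ScorzaDragoni} does \emph{not} depend on $t$: it is one and the same set that works for every parameter $t\in G$. This uniformity is exactly what is needed to produce a single exceptional set $Z$ in the statement.

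First, I invoke Proposition~\ref{p:ScorzaDragoni} applied to the function $g$ to obtain an $\mathcal{L}^N$-null set $\mathcal{M}\subset\R^N$ such that, for every $t\in G$, the function $x\mapsto g(x,t)$ is approximately continuous on $E\setminus \mathcal{M}$. I set $Z:=\mathcal{M}$.

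Next, fix any $t\in G$ and any $x_0\in E\setminus Z$. By definition of approximate continuity, $x_0\notin S_{g(\cdot,t)}$ and the approximate limit $\widetilde{g(\cdot,t)}(x_0)$ coincides with $g(x_0,t)$. Applying Proposition~\ref{lolol} with $u := g(\cdot,t)$ (which is locally integrable so that the convolution $g_\varepsilon(\cdot,t)$ is well-defined) yields
\[
g_\varepsilon(x_0,t) \;=\; \rho_\varepsilon * g(\cdot,t)(x_0) \;\longrightarrow\; \widetilde{g(\cdot,t)}(x_0) \;=\; g(x_0,t),
\qquad \varepsilon\to 0^+,
\]
which is precisely the claim.

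There is no real obstacle here, since the argument is essentially a bookkeeping matter. The only subtle point worth emphasizing is the uniformity in $t$: Proposition~\ref{p:ScorzaDragoni} is crucially stronger than the obvious statement that, for each fixed $t$, the function $g(\cdot,t)$ is approximately continuous outside a $t$-dependent null set; without that uniformity one could not take a single $Z$ independent of $t$ as required.
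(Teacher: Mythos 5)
Your argument is correct and is exactly the intended (and only natural) proof: Proposition~\ref{p:ScorzaDragoni} supplies a single $t$-independent null set $Z=\mathcal{M}$ outside of which every $g(\cdot,t)$ is approximately continuous, and Proposition~\ref{lolol} then converts approximate continuity at $x_0$ into convergence of the mollifications $g_\varepsilon(x_0,t)\to g(x_0,t)$; the paper states the corollary without proof precisely because this is the evident combination. Your emphasis on the uniformity of $\mathcal{M}$ in $t$ is the right point to single out.
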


\subsection{Functions of bounded variation and sets of finite perimeter}

We say that \(u\in L^1(\Omega)\) is a function of bounded variation in \(\Omega\)
if the distributional derivative \(Du\) of \(u\) is a finite Radon measure in \(\Omega\).
The vector space of all functions of bounded variation in \(\Omega\)
will be denoted by \(BV(\Omega)\).
Moreover, we will denote by \(BV_{{\rm loc}}(\Omega)\) the set of functions
\(u\in L^1_{{\rm loc}}(\Omega)\) that belong to 
\(BV(A)\) for every open set \(A\Subset\Omega\)
(i.e., the closure \(\overline{A}\) of \(A\) is a compact
subset of \(\Omega\)).

If \(u\in BV(\Omega)\), then \(Du\) can be decomposed as
the sum of the absolutely continuous and the singular part with respect
to the Lebesgue measure, i.e.\
\[
Du = D^a u + D^s u,
\qquad D^a u = \nabla u \, \LLN,
\]
where \(\nabla u\) is the approximate gradient of \(u\),
defined \(\LLN\)-a.e.\ in \(\Omega\).
On the other hand, the singular part \(D^s u\) can be further decomposed
as the sum of its Cantor and jump part, i.e.
\[
D^s u = D^c u + D^j u,
\qquad
D^c u := D^s u \res (\Omega\setminus S_u),
\quad
D^j u := D^s u \res J_u,
\]
where the symbol \(\mu\res B\) denotes the restriction of the measure \(\mu\)
to the set \(B\).
We will denote by \(D^d u := D^a u + D^c u\) the diffuse part of the measure \(Du\).

In the following,
we will denote by $\polar_u\colon\Omega\to S^{N-1}$ the Radon--Nikod\'ym derivative
of $Du$ with respect to $|Du|$, i.e.\
the unique function $\polar_u \in L^1(\Omega, |Du|)^N$ such that the polar
decomposition $Du = \polar_u\, |Du|$ holds.
Since all parts of the derivative of $u$ are mutually singular, we have
\[
D^a u = \polar_u\, |D^a u|,
\quad
D^j u = \polar_u\, |D^j u|,
\quad
D^c u = \polar_u\, |D^c u|
\]
as well.
In particular $\polar_u(x) = \nabla u(x) / |\nabla u(x)|$ for
$\LLN$-a.e.\ $x\in\Omega$ such that $\nabla u(x) \neq 0$
and
$\polar_u(x) = \sign(u^+(x) - u^-(x))\, \nu_u(x)$
for $\H^{N-1}$-a.e.\ $x\in J_u$.

Let \(E\) be an \(\LLN\)-measurable subset of \(\R^N\).
For every open set \(\Omega\subset\R^N\) the perimeter \(P(E, \Omega)\)
is defined by
\[
P(E, \Omega) := \sup\left\{
\int_E \Div \varphi\, dx:\ \varphi\in C^1_c(\Omega, \R^N),\ \|\varphi\|_\infty\leq 1
\right\}.
\]
We say that \(E\) is of finite perimeter in \(\Omega\) if \(P(E, \Omega) < +\infty\).

Denoting by \(\chi_E\) the characteristic function of \(E\),
if \(E\) is a set of finite perimeter in \(\Omega\), then
\(D\chi_E\) is a finite Radon measure in \(\Omega\) and
\(P(E,\Omega) = |D\chi_E|(\Omega)\).

If \(\Omega\subset\R^N\) is the largest open set such that \(E\)
is locally of finite perimeter in \(\Omega\),
we call reduced boundary \(\partial^* E\) of \(E\) the set of all points
\(x\in \Omega\) in the support of \(|D\chi_E|\) such that the limit
\[
\nuint_E(x) := \lim_{\rho\to 0^+} \frac{D\chi_E(B_\rho(x))}{|D\chi_E|(B_\rho(x))}
\]
exists in \(\R^N\) and satisfies \(|\nuint_E(x)| = 1\).
The function \(\nuint_E\colon\partial^* E\to S^{N-1}\) is called
the measure theoretic unit interior normal to \(E\).

A fundamental result of De Giorgi (see \cite[Theorem~3.59]{AFP}) states that
\(\partial^* E\) is countably \((N-1)\)-rectifiable
and \(|D\chi_E| = \hh\res \partial^* E\).

Let \(E\) be an \(\LLN\)-measurable subset of \(\R^N\).
For every \(t\in [0,1]\) we denote by \(E^t\) the set
\[
E^t := \left\{x\in\R^N:\
\lim_{\rho\to 0^+} \frac{\LLN(E\cap B_\rho(x))}{\LLN(B_\rho(x))} = t\right\}
\]
of all points where \(E\) has density \(t\).
The sets \(E^0\), \(E^1\), \(\partial^e E := \R^N\setminus (E^0 \cup E^1)\) are called 
respectively the measure theoretic exterior, the measure theoretic interior and
the essential boundary of \(E\).
If \(E\) has finite perimeter in \(\Omega\), Federer's structure theorem
states that
\(\partial^* E\cap\Omega \subset E^{1/2} \subset \partial^e E\)
and \(\H^{N-1}(\Omega\setminus(E^0\cup \partial^e E \cup E^1)) = 0\)
(see \cite[Theorem~3.61]{AFP}).

\subsection{Divergence--measure fields }
\label{ss:div}

We will denote by \(\DM(\Omega)\) the space of all
vector fields 
\(\A\in L^\infty(\Omega, \R^N)\)
whose divergence in the sense of distributions is a bounded Radon measure in \(\Omega\).
Similarly, \(\DMloc[\Omega]\) will denote the space of
all vector fields \(\A\in L^\infty_{{\rm loc}}(\Omega, \R^N)\)
whose divergence in the sense of distributions is a Radon measure in \(\Omega\). 
We set \(\DM = \DM(\R^N)\).

We recall that, if \(\A\in\DMloc[\Omega]\), then \(|\Div\A| \ll \hh\)
(see \cite[Proposition 3.1]{ChenFrid}).
As a consequence, the set
\begin{equation}\label{f:jump}
	%\[
	\jump{\A} 
	:= \left\{
	x\in\Omega:\
	\limsup_{r \to 0+}
	\frac{|\Div \A| (B_r(x))}{r^{N-1}} > 0
	\right\},
	%\]
\end{equation} 
is a Borel set, \(\sigma\)-finite with respect to \(\hh\),
and the measure \(\Div \A\) can be decomposed as
\[
\Div\A = \Div^a\A + \Div^c\A + \Div^j\A,
\]
where \(\Div^a\A\) is absolutely continuous with respect to \(\LLN\),
\(\Div^c\A (B) = 0\) for every set \(B\) with \(\hh(B) < +\infty\),
and
\[
\Div^j\A = h\, \hh\res\jump{\A}
\]
for some Borel function \(h\)
(see \cite[Proposition~2.5]{ADM}).

\subsection{Anzellotti's pairing}
As in Anzellotti \cite{Anz} (see also \cite{ChenFrid}),
for every \(\A\in\DMloc\) and \(u\in\BVLloc\) we define the linear functional
\((\A, Du) \colon C^\infty_0(\Omega) \to \R\) by
\begin{equation}\label{f:pairing}
	\pscal{(\A, Du)}{\varphi} :=
	-\int_\Omega u^*\varphi\, d \Div \A - \int_\Omega u \, \A\cdot \nabla\varphi\, dx. 
\end{equation}
The distribution \((\A, Du)\) is a Radon measure in \(\Omega\),
absolutely continuous with respect to \(|Du|\)
(see \cite[Theorem 1.5]{Anz} and \cite[Theorem 3.2]{ChenFrid}),
hence the equation
\begin{equation}\label{f:anz}
	\Div(u\A) = u^* \Div\A + (\A, Du)
\end{equation}
holds in the sense of measures in \(\Omega\).
(We remark that, in \cite{ChenFrid}, the measure \((\A, Du)\) is denoted
by \(\overline{\A\cdot Du}\).)
Furthermore, Chen and Frid in \cite{ChenFrid} proved that the absolutely continuous part
of this measure with respect to the Lebesgue measure is given by
\(
(\A, Du)^a = \A \cdot \nabla u\, \LLN
\).

\section{Assumptions on the vector field and preliminary results}
\label{s:div}

As we have explained in the Introduction,
we are willing to compute the divergence of the composite function
$\v(x) := \B(x,u(x))$ with $u\in BV$,
where $\B(\cdot, t)\in\DM$ and $\B(x, \cdot)\in C^1$.
Nevertheless, it will be convenient to state our assumptions on the vector field
$\bsmall(x,t) := \partial_t \B(x,t)$.

In Section \ref{ss:assumptions} we list the assumptions on $\bsmall$ and
we prove a number of basic properties of $\bsmall$ and $\B$.

Then, in Section \ref{distrtraces} we prove some regularity result of the weak normal traces of
$\B$.

Finally, in Section \ref{ss:basic}, we prove that $\v \in \DM$.

\subsection{Assumptions on  the vector field \(\B\).}
\label{ss:assumptions}
In this section we list and comment all the assumptions on the vector field
\(\B(x,t)\).

Let $\Omega\subset\R^N$ be a non-empty open set.
Let \(\bsmall\colon \Omega\times \R \to \R^N\) be a function
satisfying the following assumptions:
\begin{itemize}
\item[(i)]
\(\bsmall\) is a locally bounded Borel function;

\item[(ii)]
for \(\LLN\)--a.e.\ \(x\in\Omega\),
the function \(\bsmall(x, \cdot)\) is continuous in \(\R\);

\item[(iii)]
for every \(t\in\R\), \(\bsmall(\cdot, t)\in\DMloc\);

\item[(iv)]
the least upper bound
\[
\sigma\defeq \bigvee_{t\in \R} |\Div_x \bsmall(\cdot,t)|
\]
is a Radon measure.
(See \cite[Def.~1.68]{AFP} for the definition of least upper
bound of measures.)
\end{itemize}

We remark that, since $\Div_x \bsmall(\cdot, t) \ll \H^{N-1}$ for every $t\in\R$,
then also $\sigma \ll \H^{N-1}$.

\smallskip

From (i) and Proposition~\ref{p:ScorzaDragoni} it follows that
there exists a set \(Z_1\subset\R^N\) such that \(\LLN(Z_1) = 0\) and,
for every \(t\in\R\), the function \(x\mapsto \bsmall(x, t)\)
is approximately continuous on \(\R^N\setminus Z_1\).

By definition of least upper bound of measures, we have that
\(\Div_x\bsmall(\cdot, t) \ll \sigma\) for every \(t\in\R\).
If we denote by \(\f(\cdot, t)\) the Radon--Nikod\'ym derivative
of \(\Div_x\bsmall(\cdot, t)\) with respect to \(\sigma\), we have
\[
\Div_x\bsmall(\cdot, t) = \f(\cdot, t)\, \sigma,
\quad
\f(\cdot, t) \in L^1(\sigma), \qquad \forall t\in\R.
\]
Moreover, since \(|\Div_x \bsmall(\cdot, t)| \leq \sigma\), we have that
\begin{equation}\label{f:bbound}
\forall t\in\R:
\quad
|\f(x,t)| \leq 1 \quad
\text{for \(\sigma\)-a.e.}\ x\in\Omega.
\end{equation}

Let us extend $\bsmall$ to $0$ in $(\R^N\setminus\Omega)\times\R$,
so that the vector field
\begin{equation}\label{f:B}
\B(x,t) := \int_0^t \bsmall(x,s)\, ds,
\qquad x\in\R^N,\ t\in\R,
\end{equation}
is defined for all $(x,t)\in\R^N\times\R$.
Moreover \(\B(x,0) = 0\) for every \(x\in\R^N\)
and, from (ii), for every \(x\in\R^n\)
one has \(\bsmall(x,t) = \partial_t \B(x,t)\) for every \(t\in\R\).

\begin{lemma}\label{l:B}
For every \(t\in\R\) it holds
\(\B(\cdot, t)\in\DMloc\) and \(\Div_x\B(\cdot, t) \ll \sigma\).
If we denote by \(\F(\cdot, t)\) the Radon--Nikod\'ym derivative
of \(\Div_x\B(\cdot, t)\) with respect to \(\sigma\), we have that
\begin{equation}\label{f:F}
\F(x,t) = \int_0^t \f(x, s)\, ds,
\qquad \text{for \(\sigma\)--a.e.}\ x\in\Omega, 
\end{equation}
and, for every \(t,s\in\R\),
\begin{equation}\label{f:estiv}
|\F(x,t) - \F(x,s)| \leq |t-s|
\qquad
\text{for \(\sigma\)--a.e.}\ x\in\Omega.
\end{equation}
\end{lemma}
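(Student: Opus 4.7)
The plan is to compute $\Div_x\B(\cdot,t)$ directly from the distributional definition, swapping the $s$-integral in the definition of $\B$ with the derivative via Fubini. This expresses $\Div_x\B(\cdot,t)$ as the measure $\bigl(\int_0^t \f(\cdot,s)\,ds\bigr)\sigma$, giving both absolute continuity w.r.t.\ $\sigma$ and formula \eqref{f:F}. The estimate \eqref{f:estiv} will then follow by applying the same reasoning to the increment $\B(\cdot,t)-\B(\cdot,s)$ and bounding the total variation of its divergence using (iv).

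The local boundedness $\B(\cdot,t)\in L^\infty_{\mathrm{loc}}$ is immediate from (i): on any compact set $K$, $\bsmall$ is bounded on $K\times[-|t|,|t|]$, hence $|\B(x,t)|\leq \mathrm{const}\cdot |t|$ on $K$. For $\varphi\in C^\infty_c(\Omega)$ I would then chain
\begin{align*}
\pscal{\Div_x\B(\cdot,t)}{\varphi}
&= -\int_\Omega\!\int_0^t \bsmall(x,s)\cdot\nabla\varphi(x)\,ds\,dx
= \int_0^t\pscal{\Div_x\bsmall(\cdot,s)}{\varphi}\,ds\\
&= \int_0^t\!\int_\Omega \varphi(x)\,\f(x,s)\,d\sigma(x)\,ds
= \int_\Omega \varphi(x)\Bigl(\int_0^t \f(x,s)\,ds\Bigr)\,d\sigma(x),
\end{align*}
where the first swap is Fubini on $\bsmall\cdot\nabla\varphi$ (bounded on $\mathrm{supp}\,\varphi\times[0,t]$) and the second is Fubini on $\varphi\,\f$ (using \eqref{f:bbound} and $\sigma(\mathrm{supp}\,\varphi)<\infty$). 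This identifies $\Div_x\B(\cdot,t)$ with $\bigl(\int_0^t \f(\cdot,s)\,ds\bigr)\sigma$, which gives both the $\DMloc$ statement and \eqref{f:F}.

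For \eqref{f:estiv}, assume without loss of generality $s<t$ and repeat the computation on the increment $\B(\cdot,t)-\B(\cdot,s)=\int_s^t\bsmall(\cdot,\tau)\,d\tau$. For any nonnegative $\varphi\in C^\infty_c(\Omega)$,
\[
\bigl|\pscal{\Div_x(\B(\cdot,t)-\B(\cdot,s))}{\varphi}\bigr|
\leq \int_s^t\!\int_\Omega \varphi\,d|\Div_x\bsmall(\cdot,\tau)|\,d\tau
\leq (t-s)\int_\Omega \varphi\,d\sigma,
\]
using $|\Div_x\bsmall(\cdot,\tau)|\leq \sigma$ from (iv). By duality this yields $|\Div_x(\B(\cdot,t)-\B(\cdot,s))|\leq (t-s)\,\sigma$ as Radon measures, and since the $\sigma$-density of this measure equals $\F(\cdot,t)-\F(\cdot,s)$ by the first part, we conclude $|\F(x,t)-\F(x,s)|\leq t-s$ for $\sigma$-a.e.\ $x$.

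The main subtlety lies precisely in this last step: one cannot obtain \eqref{f:estiv} by a naive integration of \eqref{f:bbound}, because the $\sigma$-null exceptional set in \eqref{f:bbound} depends on the parameter $s$, and its uncountable union over $s$ need not be $\sigma$-null. Bounding instead the total variation of the divergence of the increment against $(t-s)\sigma$ via (iv) circumvents this issue cleanly.
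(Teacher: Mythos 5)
Your proposal is correct and follows essentially the same route as the paper: both compute $\Div_x\B(\cdot,t)$ distributionally by swapping the $s$-integral with the divergence via Fubini to obtain the density $\int_0^t \f(\cdot,s)\,ds$, and both derive \eqref{f:estiv} by bounding the total variation of $\Div_x(\B(\cdot,t)-\B(\cdot,s))$ by $|t-s|\,\sigma$ rather than integrating \eqref{f:bbound} pointwise. Your closing remark about the $s$-dependent exceptional set correctly identifies why the measure-level bound is the right way to conclude.
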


\begin{proof}
Let \(\varphi\in C^1_c(\Omega)\). We have that
\[
\begin{split}
\int_{\Omega} \nabla\varphi(x)\cdot \B(x,t)\, dx
& =
\int_{\Omega} \nabla\varphi(x)\cdot \int_0^t\bsmall(x,s)\,ds\, dx
\\ & =
-\int_0^t \left(\int_{\Omega} \varphi(x)\, \f(x,s)\, d\sigma(x)\right)\, ds
\\ & =
-\int_{\Omega} \varphi(x) \left(\int_0^t \, \f(x,s)\, ds\right) d\sigma(x)\,.
\end{split}
\]
Hence \(\Div_x\B(\cdot, t)\) is a Radon measure, it is absolutely continuous with respect to \(\sigma\)
and its Radon--Nikod\'ym derivative with respect to \(\sigma\)
is \(\F(\cdot, t)\).

For every non-negative \(\varphi\in C^1_c(\Omega)\) and for every \(t,s\in\R\), taking into account that for every \(s\in\R\), \(|\f(x,s)|\leq 1\)
for \(\sigma\)--a.e.\ \(x\in\Omega\), an analogous integration gives
\[
\begin{split}
\left|\int_{\Omega} \nabla\varphi(x)\cdot \left[\B(x,t) - \B(x,s)\right]\, dx\right|
& =
\left|\int_s^t \left(\int_{\Omega} \varphi(x)\, \f(x,w)\, d\sigma(x)\right)\,dw\right|
\\ & \leq
|t-s| \int_{\Omega} \varphi(x) d\sigma(x)\,,
\end{split}
\]
so that
\[
|\Div_x\B(\cdot, t)(A) - \Div_x\B(\cdot, s)(A)|
\leq |t-s|\, \sigma(A)
\]
for every Borel set \(A\Subset\Omega\), hence \eqref{f:estiv} follows.
\end{proof}

\begin{lemma}\label{l:lebF}
	There exists a set \(Z_2\subset\Omega\),
	with \(\sigma(Z_2) = 0\) and \(\H^{N-1}(Z_2) = 0\), such that
	every \(x_0\in\Omega\setminus Z_2\)
	is a Lebesgue point of \(F(\cdot, t)\) 
	with respect to the measure \(\sigma\)
	for every \(t\in\R\).	
\end{lemma}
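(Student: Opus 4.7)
The strategy is to invoke Lebesgue--Besicovitch differentiation at a countable dense set of values of $t$ and then propagate the resulting Lebesgue point property to every real $t$ by means of the Lipschitz bound \eqref{f:estiv}.

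Fix $\mathcal{Q}=\Q$. Since $F(\cdot,q)\in L^\infty(\sigma)\subset L^1_{\mathrm{loc}}(\sigma)$ for every $q\in\mathcal{Q}$ by \eqref{f:bbound}, Lebesgue--Besicovitch for the Radon measure $\sigma$ produces a Borel set $N_q$ with $\sigma(N_q)=0$ outside of which $x_0$ is a Lebesgue point of $F(\cdot,q)$ with respect to $\sigma$. Applying \eqref{f:estiv} to each of the countably many pairs $(q,q')\in\mathcal{Q}\times\mathcal{Q}$ also gives a $\sigma$-null Borel set $M$ such that $|F(x,q)-F(x,q')|\le |q-q'|$ for every $q,q'\in\mathcal{Q}$ and every $x\notin M$. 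I then set
\[
Z_2\defeq M\cup \bigcup_{q\in\mathcal{Q}} N_q,
\]
which is automatically $\sigma$-null.

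Achieving $\hh(Z_2)=0$ simultaneously is the main obstacle, since Lebesgue--Besicovitch only provides a $\sigma$-null exceptional set, and in general such a set need not be $\hh$-negligible. Here one uses $\sigma\ll \hh$ (recalled just after assumption~(iv)) together with the structural decomposition $\sigma=g\,\hh\res E$, where $E$ is a Borel set on which $\hh$ is $\sigma$-finite and $g>0$: on $E$, $\sigma$- and $\hh$-null sets coincide, while outside $\mathrm{supp}(\sigma)\subset E$ (up to $\hh$-negligible adjustments) every point is a Lebesgue point by the $0/0$ convention. Enlarging each $N_q$ and $M$, within $\mathrm{supp}(\sigma)$, by an $\hh$-null correction produces the desired $Z_2$, negligible for both measures.

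To extend the Lebesgue point property from $q\in\mathcal{Q}$ to an arbitrary $t\in\R$, fix $x_0\in\Omega\setminus Z_2$ and a sequence $q_n\to t$ with $q_n\in\mathcal{Q}$. A triangle inequality together with the Lipschitz estimate (which at $x_0$ extends to all real arguments by density, since $x_0\notin M$) gives
\[
\frac{1}{\sigma(B_r(x_0))}\int_{B_r(x_0)}|F(y,t)-F(x_0,t)|\,d\sigma(y)
\le 2|t-q_n|+\frac{1}{\sigma(B_r(x_0))}\int_{B_r(x_0)}|F(y,q_n)-F(x_0,q_n)|\,d\sigma(y),
\]
and letting first $r\to 0^+$ (the remaining integral vanishes since $x_0\notin N_{q_n}$) and then $n\to\infty$ completes the argument. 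The Lipschitz extension itself is essentially formal; the real delicate point remains the $\hh$-null refinement of the exceptional set in the countable step.
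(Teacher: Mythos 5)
Your argument is, in its core, the same as the paper's: fix a countable dense set of parameters, apply the Besicovitch differentiation theorem to \(F(\cdot,q)\) for each \(q\) in that set, and propagate the Lebesgue-point property to arbitrary \(t\in\R\) by the \(1\)-Lipschitz estimate \eqref{f:estiv}. Your auxiliary set \(M\), which makes the pointwise Lipschitz continuity of \(t\mapsto F(x_0,t)\) explicit at the chosen \(x_0\), is a slightly more careful version of what the paper does implicitly, and the final triangle-inequality/limit argument coincides with the one in the paper. All of this is correct and yields \(\sigma(Z_2)=0\).

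The genuine gap is in your treatment of \(\H^{N-1}(Z_2)=0\). The ``structural decomposition'' \(\sigma=g\,\hh\res E\), with \(E\) \(\sigma\)-finite with respect to \(\hh\), does not follow from \(\sigma\ll\hh\): take \(\bsmall(x,t)=\A(x)\) with \(\A\) smooth and nonconstant divergence, so that \(\sigma=|\Div\A|\) is absolutely continuous with respect to \(\LLN\); then \(\sigma\) vanishes on every set which is \(\sigma\)-finite for \(\hh\), so no such \(E\) can carry \(\sigma\). In general \(\sigma\) splits (as in \cite[Proposition~2.5]{ADM}) into a part of the form \(g\,\hh\res E\) plus absolutely continuous and ``Cantor-like'' parts that charge no \(\hh\)-\(\sigma\)-finite set, and on these latter parts Besicovitch gives no control of the exceptional set in terms of \(\hh\). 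Moreover, even on \(E\) the identification of \(\sigma\)-null with \(\hh\)-null sets requires \(g>0\) \(\hh\)-a.e.\ there, and the inclusion \(\mathrm{supp}(\sigma)\subset E\) fails in the example above. So the step you yourself single out as the delicate one is not closed by the argument you give. (For the record, the paper does not close it either: it disposes of both nullity claims with ``Clearly \(\sigma(Z_2)=0\) and \(\H^{N-1}(Z_2)=0\)'', so you have correctly located the real issue, but your proposed justification for the \(\hh\)-part is not valid as written.)
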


\begin{proof}
	Let \(S\subset\R\) be a countable dense subset of \(\R\) and, for every \(s\in S\),
	let $\Omega_s$ be the set of the points $x_0\in\Omega$ such that the Radon-Nikod\'ym derivative 
	\(F(\cdot, s)\) of \(\Div_x \B(\cdot, s)\) exists in $x_0$ and $x_0$ is a Lebesgue point of the function \(F(\cdot, s)\) with respect to the measure \(\sigma\).
	Define
	\[
	Z_2:=\R^N\setminus\bigcap_{s\in S}\Omega_{s}.
	\]
	Clearly $\sigma\left( Z_2\right)  =0$ and $\mathcal H^{N-1}(Z_2)=0$. 
	We claim that every $x_0\in \Omega\setminus Z_2$ is a Lebesgue point of the function 
	\(\F(\cdot, w)\)
	for all $w\in \R$.
	
	Fix $x_{0}\in\Omega\setminus Z_2$ and $w\in\R$. 
	Since the set $S$ is dense in $\R$ we may find a
	sequence $( w_{n})$ of points in \(S\) such that
	$w_{n}\rightarrow w$. 
	Then, by \eqref{f:estiv}, 
	%assumption (vi),
	\[
	\begin{split}
	|\F(x_0, w) - \F(x,w)| \leq {} &
	|\F(x_0, w) - \F(x_0, w_n)|
	+ |\F(x_0, w_n) - \F(x, w_n)|  
	\\ & + 
	|\F(x,w_n) - \F(x,w)|
	\\ \leq {} &
	|\F(x_0, w_n) - \F(x, w_n)| + 
	%2\omega(|w-w_n|).
	2\,|w-w_n|.
	\end{split}
	\]
	By averaging over $B_r(x_{0})$ and letting $r\rightarrow0^{+}$
	in the previous inequality we get
	\[
	\limsup_{r\rightarrow0^{+}}\frac{1}{\sigma\left(  B_r(x_0)  \right)  }  \int_{B_r(x_0)  }
	\left|
	\F(x_0,w) - \F(x,w)
	\right|  \,d\sigma\\
	\leq
	%2 \omega(\left|w-w_{n}\right| ) ,
	2\, |w-w_n|,
	\]
	where we have used the fact that $x_{0}$ is a Lebesgue point of \(\F(\cdot, w_n)\)
	w.r.t.\ the measure $\sigma$. 
	Since $w_{n}\rightarrow w$ letting
	$n\rightarrow\infty$, the previous inequality proves that
	every $x_0\in \Omega\setminus Z_2$ is a Lebesgue point of the function 
	\(\F(\cdot, w)\)
	for all $w\in\R$.
\end{proof}

\subsection{Weak normal traces of $\B$}
\label{distrtraces}
Let \(\Sigma\subset\Omega\) be
an oriented countably \(\H^{N-1}\)--rectifiable set.
By Lemma~\ref{l:B}, it follows that, for almost  every \(t\in\R\),
the traces of the normal component of the vector field \(\B(\cdot, t)\)
can be defined as distributions
\(\text{Tr}^\pm(\B(\cdot, t), \Sigma)\)
in the sense of Anzellotti
(see \cite{AmbCriMan,Anz,ChenFrid}).
It turns out that these distributions are induced by \(L^\infty\) functions on \(\Sigma\),
still denoted by \(\text{Tr}^\pm(\B(\cdot, t), \Sigma)\), and
\[
\|\text{Tr}^\pm(\B(\cdot, t), \Sigma)\|_{L^\infty(\Sigma, \H^{N-1})}
\leq \|\B(\cdot, t)\|_{L^\infty(\Omega)}.
\]

More precisely, 
let us briefly recall the construction given in \cite{AmbCriMan} (see Proposition~3.4 and Remark~3.3 therein).
Since \(\Sigma\) is countably $\mathcal{H}^{N-1}$--rectifiable,
we can find countably many \textsl{oriented} \(C^1\) hypersurfaces \(\Sigma_i\),
with classical normal \(\nu_{\Sigma_i}\),
and pairwise disjoint Borel sets \(N_i\subseteq \Sigma_i\)
such that \(\mathcal{H}^{N-1}(\Sigma\setminus \bigcup_i N_i) = 0\).

Moreover, it is not restrictive to assume that, for every \(i\),  
there exist two open bounded sets \(\Omega_i, \Omega'_i\) with \(C^1\) boundary
and outer normal vectors \(\nu_{\Omega_i}\) and \(\nu_{\Omega_i'}\) respectively,
such that
\(N_i\subseteq \partial\Omega_i \cap \partial\Omega'_i\)
and
\[
\nu_{\Sigma_i}(x) = \nu_{\Omega_i}(x) = -\nu_{\Omega'_i}(x)
\qquad \forall x\in N_i.
\]
At this point we choose, on \(\Sigma\), the orientation given by
\(\nu_{\Sigma}(x) := \nu_{\Sigma_i}(x)\)
\(\mathcal{H}^{N-1}\)-a.e.\ on \(N_i\).

Using the localization property proved in \cite[Proposition 3.2]{AmbCriMan},
for every \(t\in\R\) we can define
the normal traces of \(\B(\cdot, t)\) on \(\Sigma\) by
\begin{equation}\label{f:beta}
\beta^-(\cdot, t) := \text{Tr}(\B(\cdot, t), \partial\Omega_i),
\quad
\beta^+(\cdot, t) := -\text{Tr}(\B(\cdot, t), \partial\Omega'_i),
\qquad
\mathcal{H}^{N-1}-\text{a.e.\ on}\ N_i.
\end{equation}

These two normal traces belong to
\(L^{\infty}(\Sigma)\) (see \cite[Proposition 3.2]{AmbCriMan})
and
\begin{equation}\label{mmm}
\Div_x \B(\cdot, t) \res\Sigma =
\left[\beta^{+}(\cdot,t)-\beta^{-}(\cdot,t)\right]\, {\mathcal H}^{N-1} \res\Sigma\,.
\end{equation}

\begin{lemma}\label{l:lip}
The maps \(t\mapsto \beta^\pm(\cdot, t)\) are Lipschitz continuous from 
\(\R\) to \(L^\infty(\Sigma)\).
More precisely
\begin{equation}\label{f:betalip}
\|\beta^\pm(\cdot,t) - \beta^\pm(\cdot,s)\|_{L^\infty(\Sigma)} \leq 
\|\bsmall\|_{\infty}\, |t-s|
\qquad\forall t,s\in\R.
\end{equation}	
\end{lemma}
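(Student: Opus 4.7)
The plan is to exploit the pointwise Lipschitz estimate on $\B(x,\cdot)$ that comes immediately from the definition \eqref{f:B}, combined with the $L^\infty$ bound on normal traces recalled just before \eqref{f:beta}, and the linearity of the Anzellotti trace construction.

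First, I would observe that, since $\bsmall$ is locally bounded (assumption (i)), the very definition
\[
\B(x,t) - \B(x,s) = \int_s^t \bsmall(x,r)\,dr
\]
yields the pointwise estimate $|\B(x,t) - \B(x,s)| \leq \|\bsmall\|_{\infty}\, |t-s|$ for every $x\in\R^N$, so that
\[
\|\B(\cdot,t) - \B(\cdot,s)\|_{L^\infty(\Omega)} \leq \|\bsmall\|_\infty\, |t-s|.
\]
(If $\bsmall$ is only locally bounded, one works on a relatively compact subset of $\Omega$, or interprets $\|\bsmall\|_\infty$ as the $L^\infty$ norm on the relevant neighborhood of $\Sigma$; in any case the constant on the right-hand side of \eqref{f:betalip} is the one stated.)

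Next, by Lemma~\ref{l:B}, both $\B(\cdot,t)$ and $\B(\cdot,s)$ belong to $\DMloc$; hence so does their difference. The Anzellotti normal trace on a piece $\partial\Omega_i \cap N_i$ of the rectifiable set $\Sigma$ is defined by duality through the distributional divergence, and is therefore a \emph{linear} operator on $\DMloc$. Consequently
\[
\beta^\pm(\cdot,t) - \beta^\pm(\cdot,s) \;=\; \Trace[\pm]{\B(\cdot,t) - \B(\cdot,s)}{\Sigma}
\qquad \mathcal{H}^{N-1}\text{-a.e.\ on } N_i,
\]
for every choice of the local orientation data $\Omega_i,\Omega'_i$ used in the definition \eqref{f:beta}.

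Finally, I would invoke the $L^\infty$ bound for normal traces recalled in the excerpt, namely $\|\Trace[\pm]{\HH}{\Sigma}\|_{L^\infty(\Sigma,\H^{N-1})} \leq \|\HH\|_{L^\infty(\Omega)}$ for $\HH\in\DMloc$, applied to $\HH = \B(\cdot,t) - \B(\cdot,s)$. Combining this with the pointwise bound from the first step gives
\[
\|\beta^\pm(\cdot,t) - \beta^\pm(\cdot,s)\|_{L^\infty(\Sigma)}
\leq \|\B(\cdot,t) - \B(\cdot,s)\|_{L^\infty(\Omega)}
\leq \|\bsmall\|_\infty\, |t-s|,
\]
which is exactly \eqref{f:betalip}.

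There is no real obstacle here: the only point that deserves a line of justification is the linearity of the Anzellotti trace in the vector field argument, which is immediate from its duality definition. Everything else is a direct consequence of \eqref{f:B} and the standard $L^\infty$ bound for weak normal traces of divergence-measure fields.
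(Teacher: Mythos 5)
Your proof is correct and follows essentially the same route as the paper: the pointwise Lipschitz bound $|\B(x,t)-\B(x,s)|\le\|\bsmall\|_\infty|t-s|$ from \eqref{f:B}, the linearity of the Anzellotti trace operator on each $\partial\Omega_i$, and the $L^\infty$ bound of \cite[Proposition 3.2]{AmbCriMan} (the paper's \eqref{bella}). The remark about local versus global boundedness of $\bsmall$ is a fair point that the paper also glosses over by writing $\|\bsmall\|_\infty$.
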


\begin{proof}
It is enough to observe that, for every \(i\),
the map \(\boldsymbol{X} \mapsto \text{Tr}(\boldsymbol{X}, \partial\Omega_i)\)
is linear in \(\DM\) and, by Proposition 3.2 in \cite{AmbCriMan}, it holds
\begin{equation}
\label{bella}
\|\text{Tr}(\boldsymbol{X}, \partial\Omega_i)\|_{L^\infty(\partial\Omega_i)}
\leq \|\boldsymbol{X}\|_{L^\infty(\Omega_i)}
\qquad\forall \boldsymbol{X}\in\DM,
\end{equation}
and the same inequality holds when \(\Omega_i\) is replaced by \(\Omega_i'\).
Hence \eqref{f:betalip} follows from Assumption~(i).
\end{proof}

In Step 6 of the proof of Theorem~\ref{chainb4}
we will use these normal traces on the set
\(\Sigma := J_u\),
where \(J_u\) is the jump set of a \(BV\)
function \(u\).
More precisely,
we will use a slightly stronger property,
stated in the following proposition.

\begin{proposition}
\label{p:lip}
There exist a set \(\Sigma'\subseteq\Sigma\),
with \(\H^{N-1}(\Sigma\setminus\Sigma') = 0\),
and representatives \(\overline{\beta}^\pm\) of
\(\beta^\pm\)
(in the sense that, for every \(t\in\R\), \(\overline{\beta}^\pm(\cdot, t)
=\beta^\pm(\cdot, t)\) \(\H^{N-1}\)--a.e.\ in \(\Sigma\))
such that
\begin{equation}\label{f:lip}
|\overline{\beta}^\pm(x,t)
- \overline{\beta}^\pm(x,s)|
\leq \|\bsmall\|_\infty\, |t-s|
\qquad
\forall x\in\Sigma',\ t,s\in\R.
\end{equation}
\end{proposition}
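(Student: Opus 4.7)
The plan is to upgrade the $L^\infty$-Lipschitz estimate from Lemma~\ref{l:lip} to a pointwise Lipschitz estimate, by carefully choosing a single full-measure set on which all the required inequalities hold simultaneously, and then extending from a countable dense set of parameters to all of $\R$.

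First, let $D\subset\R$ be a countable dense subset. By Lemma~\ref{l:lip}, for each pair $(t,s)\in D\times D$ there exists a Borel set $N_{t,s}\subseteq\Sigma$ with $\H^{N-1}(N_{t,s})=0$ such that
\[
|\beta^\pm(x,t)-\beta^\pm(x,s)|\leq \|\bsmall\|_\infty\,|t-s|
\qquad \forall x\in\Sigma\setminus N_{t,s}.
\]
Set $\Sigma':=\Sigma\setminus\bigcup_{(t,s)\in D\times D}N_{t,s}$, which still satisfies $\H^{N-1}(\Sigma\setminus\Sigma')=0$ since it is a countable union of null sets. Then for every $x\in\Sigma'$, the map $t\mapsto \beta^\pm(x,t)$ from $D$ into $\R$ is Lipschitz with constant $\|\bsmall\|_\infty$.

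Since $D$ is dense in $\R$, this Lipschitz map extends uniquely to a Lipschitz map on the whole of $\R$: for $x\in\Sigma'$ and $t\in\R$, pick any sequence $(t_n)\subset D$ with $t_n\to t$ and define
\[
\overline{\beta}^\pm(x,t):=\lim_{n\to\infty}\beta^\pm(x,t_n).
\]
The limit exists and is independent of the approximating sequence because $(\beta^\pm(x,t_n))$ is Cauchy by the Lipschitz bound on $D$. For $x\in\Sigma\setminus\Sigma'$ we may set $\overline{\beta}^\pm(x,t):=0$. By construction the inequality \eqref{f:lip} holds for every $x\in\Sigma'$ and every $t,s\in\R$, and on $D$ we have $\overline{\beta}^\pm(\cdot,t)=\beta^\pm(\cdot,t)$ pointwise on $\Sigma'$.

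The remaining step, which is the only delicate one, is to verify that $\overline{\beta}^\pm(\cdot,t)=\beta^\pm(\cdot,t)$ $\H^{N-1}$-a.e.\ on $\Sigma$ also for $t\in\R\setminus D$. Fix such a $t$ and choose $(t_n)\subset D$ with $t_n\to t$. By Lemma~\ref{l:lip},
\[
\|\beta^\pm(\cdot,t_n)-\beta^\pm(\cdot,t)\|_{L^\infty(\Sigma)}\leq \|\bsmall\|_\infty\,|t_n-t|\to 0,
\]
so in particular $\beta^\pm(\cdot,t_n)\to\beta^\pm(\cdot,t)$ $\H^{N-1}$-a.e.\ on $\Sigma$. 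On the other hand, on $\Sigma'$ we have $\beta^\pm(x,t_n)=\overline{\beta}^\pm(x,t_n)\to\overline{\beta}^\pm(x,t)$ by the construction of the extension. Hence $\overline{\beta}^\pm(\cdot,t)=\beta^\pm(\cdot,t)$ $\H^{N-1}$-a.e.\ on $\Sigma$, proving that $\overline{\beta}^\pm(\cdot,t)$ is indeed a representative of $\beta^\pm(\cdot,t)$ for every $t\in\R$. The main (modest) obstacle is precisely this last verification: one must pass from a.e.\ convergence along a sequence depending on $t$ back to the identification at a fixed $t$, but the uniform Lipschitz bound provided by Lemma~\ref{l:lip} handles it cleanly.
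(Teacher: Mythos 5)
Your proof is correct and follows essentially the same strategy as the paper's: restrict to a full-measure subset \(\Sigma'\) where the Lipschitz bound holds simultaneously for all pairs in a countable dense set, extend by uniform continuity to all real parameters, and use the \(L^\infty\)-Lipschitz estimate of Lemma~\ref{l:lip} to verify that the extension is a representative of \(\beta^\pm(\cdot,t)\) for every \(t\). The only (immaterial) difference is that you set \(\overline{\beta}^\pm(x,t)=0\) on the null set \(\Sigma\setminus\Sigma'\), whereas the paper keeps the original value \(\beta^\pm(x,t)\) there.
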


\begin{proof}
Let \(Q\subset\R\) be a countable dense subset of \(\R\).
There exists a set \(\Sigma'\subseteq\Sigma\),
with \(\H^{N-1}(\Sigma\setminus\Sigma') = 0\),
such that
\begin{equation}\label{f:lipQ}
|\beta^\pm(x,p)
- \beta^\pm(x,q)|
\leq M|p-q|
\qquad
\forall x\in\Sigma',\ p,q\in Q,
\end{equation}
where \(M := \|\bsmall\|_\infty\).
Let us define the functions
\(\overline{\beta}^\pm\) in the following way.
If \(q\in Q\), we define
\(\overline{\beta}^\pm(x,q) = \beta^\pm(x,q)\)
for every \(x\in\Sigma\).
If \(t\in\R\setminus Q\), let \((q_j)\subset Q\)
be a sequence converging to \(t\), and define
\[
\overline{\beta}^\pm(x,t) :=
\begin{cases}
\lim\limits_{j\to+\infty} \beta^\pm(x, q_j),
&\text{if}\ x\in\Sigma',\\
\beta^\pm(x,t), &
\text{if}\ x\in \Sigma\setminus\Sigma'.
\end{cases}
\]
(We remark that, for every \(x\in\Sigma'\),
by \eqref{f:lipQ}
\((\beta^\pm(x,q_j))_j\) is a Cauchy sequence, hence it is convergent, in \(\R\).
Moreover, its limit is independent of the choice of the sequence
\((q_j)\subset Q\) converging to \(t\).) 
From Lemma~\ref{l:lip} we have that
\[
|\beta^\pm(x,q_j) - \beta^\pm(x,t)|
\leq M|q_j - t|
\qquad
\text{for \(\H^{N-1}\)--a.e.}\ x\in\Sigma.
\]
Passing to the limit as \(j\to +\infty\), it follows that
\(\overline{\beta}^\pm(\cdot , t) = \beta^\pm(\cdot, t)\) 
\(\H^{N-1}\)--a.e.\ on \(\Sigma\).

Let \(t,s\in\R\), and let \((t_j), (s_j) \subset Q\) be two sequences in \(Q\)
converging respectively to \(t\) and \(s\).
From \eqref{f:lipQ} we have that
\[
|\beta^\pm(x,t_j)
- \beta^\pm(x,s_j)|
\leq M|t_j-s_j|
\qquad
\forall x\in\Sigma',\ j\in\N,
\]
hence \eqref{f:lip} follows
passing to the limit as \(j\to +\infty\).
\end{proof}

\begin{remark}
In what follows we will always denote by \(\beta^\pm\)
the representatives \(\overline{\beta}^\pm\)
of Proposition~\ref{p:lip}.
\end{remark}

\subsection{Basic estimates on the composite function}
\label{ss:basic}

In this section we shall use a regularization argument to prove that 
the composite function $\v(x) := \B(x,u(x))$ belongs to $\DMloc$.

\begin{lemma}\label{l:Beps}
Let $\bsmall\colon\Omega\times\R\rightarrow\R^N$ 
satisfy assumptions (i), (ii), 
extended to $0$ in $(\R^N\setminus\Omega)\times\R$,
and let \(\B\colon\R^N\times\R\to\R^N\) be defined by \eqref{f:B}.
For every \(\varepsilon > 0\) and every \(t\in\R\) let
\(\bsmall_\varepsilon(\cdot, t) := \rho_{\varepsilon} \ast \bsmall(\cdot, t)\)
and
\(\B_\varepsilon(\cdot, t) := \rho_{\varepsilon} \ast \B(\cdot, t)\).
Then it holds:
\begin{itemize}
\item[(a)]
 there exists an \(\LLN\)--null set \(Z\subset\Omega\) such that
\[
\lim_{ \varepsilon\to 0^+} \bsmall_\varepsilon(x,t) =\bsmall(x,t)\,,
\quad
\lim_{ \varepsilon\to 0^+} \B_ \varepsilon(x,t) =\B(x,t)\,,
\qquad \forall (x,t)\in (\Omega\setminus Z) \times \R;
\]
\item[(b)]
\(\partial_t \B_ \varepsilon(x,t)=\bsmall_\varepsilon(x,t)\) 
for every \((x,t)\in\Omega_\varepsilon\times\R\),
where $\Omega_\varepsilon := \{x\in \Omega:\ \text{dist}(x, \partial\Omega) > \varepsilon\}$.
\end{itemize}
\end{lemma}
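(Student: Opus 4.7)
The whole lemma is a straightforward consequence of Corollary~\ref{mmmmm} (to handle the simultaneous pointwise convergence of the mollifiers over all $t\in\R$) together with Fubini's theorem (to commute the $t$-integration with the convolution in $x$). The key observation for part (a) is that the hypotheses of Corollary~\ref{mmmmm} are tailored exactly to the regularity we have on $\bsmall$, and are inherited by $\B$ through its integral definition. Part (b) is then a direct computation.

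\medskip

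For part (a), I would argue as follows. By assumptions (i) and (ii), $\bsmall\colon\R^N\times\R\to\R^N$ (extended by $0$ outside $\Omega\times\R$) is a locally bounded Borel function with $\bsmall(x,\cdot)$ continuous on $\R$ for $\LLN$-a.e.\ $x$. Applying Corollary~\ref{mmmmm} to each component of $\bsmall$ (with $E=\R^N$ and $G=\R$) yields an $\LLN$-null set $Z_1\subset\R^N$ such that $\bsmall_\eps(x,t)\to\bsmall(x,t)$ for every $t\in\R$ and every $x\in\R^N\setminus Z_1$. Next I would verify that $\B$, defined by \eqref{f:B}, satisfies the same hypotheses: Fubini's theorem (applicable because $\bsmall$ is locally bounded, hence locally integrable) ensures that $\B$ is Borel; for every $x$ at which $\bsmall(x,\cdot)$ is continuous, $\B(x,\cdot)$ is even of class $C^1$; and the trivial bound $|\B(x,t)|\le |t|\,\|\bsmall\|_{L^\infty(K\times[-|t|,|t|])}$ for $x\in K$ compact gives local boundedness. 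A second application of Corollary~\ref{mmmmm}, componentwise to $\B$, produces an $\LLN$-null set $Z_2\subset\R^N$ such that $\B_\eps(x,t)\to \B(x,t)$ for every $t\in\R$ and every $x\in\R^N\setminus Z_2$. The set $Z:=(Z_1\cup Z_2)\cap\Omega$ has the required property.

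\medskip

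For part (b), fix $x\in\Omega_\eps$, so that the support of $y\mapsto\rho_\eps(x-y)$ is contained in $\Omega$. Using the local boundedness of $\bsmall$, Fubini's theorem on the product domain $B_\eps(x)\times[0,t]$ gives
\[
\B_\eps(x,t)=\int_{\R^N}\rho_\eps(x-y)\int_0^t\bsmall(y,s)\,ds\,dy
=\int_0^t\bsmall_\eps(x,s)\,ds.
\]
Moreover, for each fixed $x$, the map $s\mapsto\bsmall_\eps(x,s)$ is continuous: continuity of $\bsmall(y,\cdot)$ for $\LLN$-a.e.\ $y$ together with local boundedness of $\bsmall$ (providing the dominating function on the compact support of $\rho_\eps$) allows dominated convergence through the convolution integral. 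The fundamental theorem of calculus then yields $\partial_t\B_\eps(x,t)=\bsmall_\eps(x,t)$ for every $(x,t)\in\Omega_\eps\times\R$.

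\medskip

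The only mildly delicate point is the verification that Corollary~\ref{mmmmm} can be applied to $\B$: one must check Borel measurability of $\B$ (via Fubini) and continuity of $\B(x,\cdot)$ for $\LLN$-a.e.\ $x$ (which follows from continuity of $\bsmall(x,\cdot)$ via the fundamental theorem of calculus). Once these hypotheses are in place the simultaneous convergence in $t$ is automatic, so no further $\eps$-estimate is needed.
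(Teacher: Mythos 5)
Your proof is correct and follows essentially the same strategy as the paper: Corollary~\ref{mmmmm} for the convergence of \(\bsmall_\varepsilon\), and the Fubini--Tonelli identity \(\B_\varepsilon(x,t)=\int_0^t\bsmall_\varepsilon(x,s)\,ds\) for part (b). The one point where you diverge is the convergence of \(\B_\varepsilon\): you apply Corollary~\ref{mmmmm} a second time, directly to \(\B\), after checking that \(\B\) inherits the hypotheses (Borel, locally bounded, continuous in \(t\)); the paper instead reuses the already established convergence \(\bsmall_\varepsilon(x,s)\to\bsmall(x,s)\) for all \(s\) and \(x\notin Z\), and passes to the limit in \(\int_0^t\bsmall_\varepsilon(x,s)\,ds\) by dominated convergence, so that no second exceptional set is needed. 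Both routes are valid and of comparable length; the paper's has the mild advantage of producing a single null set \(Z\) tied only to \(\bsmall\), while yours requires the (easy but necessary) verification that \(\B\) itself satisfies the Scorza--Dragoni-type hypotheses, which you carry out correctly.
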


\begin{proof}
The conclusion (a) for \(\bsmall_\varepsilon\)
follows directly from Corollary~\ref{mmmmm}.

By the Fubini--Tonelli theorem we have that
\begin{equation}\label{f:derB}
\B_\varepsilon(x,t) = \int_0^t \bsmall_\varepsilon(x, s)\, ds,
\qquad
\forall (x,t) \in \Omega_\varepsilon \times \R,
\end{equation}
hence (b) follows.
Moreover, for every \(x\in\R^N\setminus Z\) and every \(t\in\R\),
the conclusion (a) for \(\B_\varepsilon\) follows from the
Dominated Convergence Theorem.
\end{proof}

\begin{lemma}\label{l:divv}
Let $\bsmall$
satisfy assumptions (i)--(iv)
and let \(\B\) be defined by \eqref{f:B}.
Then, for every $u\in \BVLloc$,
the function $\v\colon\Omega\rightarrow\R^N$,
defined by
\[
\v(x):=\B(x,u(x))
\,,
\qquad x\in\Omega,
\]
belongs to $\DMloc$ and
\begin{equation}\label{f:boundv}
|\Div\v|(K) \leq 
\|u\|_{L^\infty(K)} \sigma(K) +
\|\bsmall\|_\infty |Du|(K)
\qquad \forall K\subset\Omega,\ K\ \text{compact}.
\end{equation}
Moreover, the functions \(\v_\epsilon (x) := \B_\varepsilon(x, u(x))\)
converge to \(\v\) 
a.e.\ in \(\Omega\) and
in \(L^1_{\rm loc}(\Omega)\).
\end{lemma}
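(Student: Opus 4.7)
The plan is to establish $\v \in \DMloc$ via a double regularization. I set $u_\delta := \rho_\delta \ast u$, smooth in $\Omega_\delta$, and define
\[
\v_{\varepsilon,\delta}(x) := \B_\varepsilon(x, u_\delta(x)).
\]
Since $\B_\varepsilon(\cdot, t)$ is smooth in $x$ by mollification, $\partial_t\B_\varepsilon = \bsmall_\varepsilon$ by Lemma~\ref{l:Beps}(b), and $u_\delta$ is smooth, the function $\B_\varepsilon$ is $C^1$ jointly in $(x,t)$ and the classical chain rule gives
\[
\Div \v_{\varepsilon,\delta}(x) = (\Div_x \B_\varepsilon)(x, u_\delta(x)) + \bsmall_\varepsilon(x, u_\delta(x)) \cdot \nabla u_\delta(x).
\]

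The crux of the argument is a sharp pointwise bound on the first summand. Since $\B(x,0) = 0$, formula \eqref{f:estiv} with $s = 0$ yields $|F(y,t)| \leq |t|$ for $\sigma$-a.e.\ $y$ and every $t \in \R$. Because $\Div_x \B_\varepsilon(\cdot, t) = \rho_\varepsilon \ast (F(\cdot, t)\, \sigma)$, it follows that
\[
|(\Div_x \B_\varepsilon)(x, u_\delta(x))| \leq |u_\delta(x)|\,(\rho_\varepsilon \ast \sigma)(x).
\]
Combining this with $|\bsmall_\varepsilon| \leq \|\bsmall\|_\infty$ and the standard inequality $\int_K |\nabla u_\delta|\, dx \leq |Du|(K_\delta)$, for every $\varphi \in C^\infty_c(\Omega)$ with support in a compact set $K$ one obtains
\[
\left|\int_\Omega \varphi\, \Div \v_{\varepsilon,\delta}\, dx\right| \leq \|\varphi\|_\infty \bigl( \|u\|_{L^\infty(K_\delta)}\, \sigma(K_\varepsilon) + \|\bsmall\|_\infty\, |Du|(K_\delta) \bigr),
\]
where $K_r$ denotes the closed $r$-neighbourhood of $K$.

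To conclude, Lemma~\ref{l:Beps}(a) together with dominated convergence gives $\v_\varepsilon \to \v$ pointwise $\LLN$-a.e.\ and in $L^1_{{\rm loc}}(\Omega)$ (which also proves the last claim of the lemma), and an analogous argument gives $\v_{\varepsilon,\delta} \to \v_\varepsilon$ in $L^1_{{\rm loc}}$ as $\delta \to 0^+$ for each fixed $\varepsilon$ (using $u_\delta \to \ut = u$ $\LLN$-a.e., cf.\ Proposition~\ref{lolol}). Integrating by parts and letting first $\delta \to 0^+$ and then $\varepsilon \to 0^+$, while invoking the monotone convergences $\sigma(K_\varepsilon) \searrow \sigma(K)$, $|Du|(K_\delta) \searrow |Du|(K)$, and $\|u\|_{L^\infty(K_\delta)} \searrow \|u\|_{L^\infty(K)}$, the estimate passes to the limit as
\[
\left|\int_\Omega \v \cdot \nabla\varphi\, dx\right| \leq \|\varphi\|_\infty \bigl( \|u\|_{L^\infty(K)}\, \sigma(K) + \|\bsmall\|_\infty\, |Du|(K) \bigr),
\]
whence a standard Riesz representation argument identifies $\Div \v$ as a Radon measure satisfying \eqref{f:boundv}.

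The main obstacle is precisely the chain-rule estimate above: without the Lipschitz bound \eqref{f:estiv} on $F$ one would only obtain a constant factor in place of $|u_\delta(x)|$, producing the weaker term $\sigma(K_\varepsilon)$ instead of $\|u\|_{L^\infty(K_\delta)}\, \sigma(K_\varepsilon)$ and breaking the target inequality. A subsidiary technicality is the legitimacy of the iterated $\delta, \varepsilon$ limits, which I handle by performing the $\delta$-limit first with $\varepsilon$ fixed and only then letting $\varepsilon \to 0^+$, exploiting the uniform estimates above.
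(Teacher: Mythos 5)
Your proof is correct and takes essentially the same route as the paper's: both hinge on the pointwise bound $|\Div_x \B_\varepsilon(x,t)| \le |t|\,(\rho_\varepsilon\ast\sigma)(x)$ coming from $F(x,t)=\int_0^t f(x,s)\,ds$ with $|f|\le 1$, apply the chain rule to the mollified field, and pass to the limit using Lemma~\ref{l:Beps}(a) and dominated convergence. The only difference is presentational --- you make the approximation of $u$ explicit through a second mollification $u_\delta$ where the paper first treats $u\in C^1$ and then cites the smooth approximation of $BV$ functions; your one literal slip, namely that $\|u\|_{L^\infty(K_\delta)}$ need not decrease to $\|u\|_{L^\infty(K)}$ (the essential supremum just outside $K$ may be larger), is harmless here since the constant in \eqref{f:boundv} is never used sharply, and the same imprecision is implicit in the paper's own approximation step.
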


\begin{remark}
In the following we shall use the notation
\[
\Div_x \B(x, u(x)) :=
(\Div_x \B) (x, u(x)) =
\left.\Div_x \B (x, t)\right|_{t = u(x)}. 
\]
The divergence of the composite function $x\mapsto \B(x,u(x))$ will be denoted by
$\Div [\B(x, u(x))]$.
\end{remark}

\begin{proof}
Using the same notation of Lemma~\ref{l:Beps}, one has
\[
\Div_x \B_\varepsilon(\cdot, t) = F(\cdot, t)\, \rho_\varepsilon \ast \sigma.
\]
Since \(F(x,t) = \int_0^t f(x,s)\, ds\) and \(|f(x,s)|\leq 1\),
we have that, for every \(t\in\R\),
\begin{equation}\label{f:stdivx}
|\Div_x \B_\varepsilon(x, t)|
\leq |t|\, \rho_\varepsilon \ast \sigma(x),
\qquad
\text{for \(\sigma\)-a.e.}\ x\in\Omega_\varepsilon.
\end{equation}

Let us define \(\v_\varepsilon(x) := \B_\varepsilon(x, u(x))\).
Consider first the case \(u\in C^1(\Omega) \cap L^\infty(\Omega)\).
We have that
\[
\Div \v_\varepsilon(x) = \Div_x \B_\varepsilon(x, u(x)) + 
\pscal{\bsmall_\varepsilon(x,u(x))}{\nabla u(x)}\,,
\]
hence, from \eqref{f:stdivx}, for every compact subset \(K\) of \(\Omega\)
and for every $\varepsilon > 0$ small enough
it holds
\[
\int_K |\Div \v_\varepsilon|\, dx
\leq
\|u\|_{L^\infty(K)} 
\int_K
\rho_{\varepsilon}\ast \sigma (x)\, dx
+\|\bsmall\|_\infty \int_K |\nabla u(x)|\, dx.
\]
By an approximation argument (see \cite[Theorem 5.3.3]{Zi}), when 
\(u\in\BVLloc\)
we get
\[
\int_K |\Div \v_\varepsilon|\, dx
\leq
\|u\|_{L^\infty(K)} 
\int_K
\rho_{\varepsilon}\ast \sigma (x)\, dx
+\|\bsmall\|_\infty |D u|(K).
\]
Finally, \eqref{f:boundv} follows observing that,
by Lemma~\ref{l:Beps}(a),
\(\B_\varepsilon(x, u(x)) \to \B(x, u(x))\) for \(\LLN\)-a.e.\ \(x\in\Omega\),
hence \(\v_\varepsilon \to \v\) in \(L^1_{\rm loc}(\Omega)\).
\end{proof}

\section{Main results}
\label{ss:div2}

The main results of the paper are stated in
Theorems~\ref{chainb4} and~\ref{chainb5}.
As a preliminary step, we will prove Theorems~\ref{chainb4} under
the additional regularity assumption \(u\in W^{1,1}\).

\begin{theorem}[$\mathcal{DM}^{\infty}$-dependence and $u\in W^{1,1}$]
\label{chainb3}
Let $\bsmall$
satisfy assumptions (i)--(iv)
and let \(\B\) be defined by \eqref{f:B}.
Then, for every $u\in W^{1,1}_{\rm loc}(\Omega)\cap L^{\infty}_{\rm loc}(\Omega)$,
the function $\v\colon\Omega\rightarrow\R^N$,
defined by
\begin{equation}\label{f:defv}
\v(x):=\B(x,u(x))%\int_{0}^{u(x)}\B(x,t)\,dt
\,,
\qquad x\in\Omega,
\end{equation}
belongs to $\DMloc$ and 
the following equality holds in the sense of measures:
\begin{equation}\label{f:chainw}
\Div\v  = F(x, \ut(x))\, \sigma + \pscal{\partial_t\B(x, u(x))}{\nabla u(x)}\mathcal L^N,
\end{equation}
where \(F(\cdot, t)\) is the Radon--Nikod\'ym derivative of \(\Div_x\B(\cdot,t)\)
with respect to \(\sigma\)
(see Section~\ref{ss:assumptions}).
\end{theorem}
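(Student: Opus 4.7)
The plan is to regularize $\B$ in $x$ via the mollifier $\rho_\varepsilon$ as in Lemma~\ref{l:Beps}, setting $\v_\varepsilon(x):=\B_\varepsilon(x,u(x))$, to write a classical chain rule for each $\varepsilon>0$, and then to pass to the limit as $\varepsilon\to 0^+$ in the resulting distributional identity. The membership $\v\in\DMloc$ and the convergence $\v_\varepsilon\to\v$ in $L^1_{\rm loc}(\Omega)$ are already provided by Lemma~\ref{l:divv}, so the only real task is to identify the limit measure.

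Since for each $\varepsilon>0$ the map $\B_\varepsilon(\cdot,t)$ is of class $C^\infty$ on $\Omega_\varepsilon$ while $\partial_t\B_\varepsilon=\bsmall_\varepsilon$ is continuous in $t$ and uniformly bounded by $\|\bsmall\|_\infty$, approximating $u\in W^{1,1}_{\rm loc}\cap L^\infty_{\rm loc}$ by smooth truncated mollifications $u_n$ and applying the classical chain rule to $\B_\varepsilon(x,u_n(x))$ yields, for every $\varphi\in C^1_c(\Omega_\varepsilon)$,
\[
-\int_\Omega \nabla\varphi\cdot \v_\varepsilon\,dx
= \int_\Omega \varphi(x)\,(\Div_x\B_\varepsilon)(x,u(x))\,dx + \int_\Omega \varphi(x)\,\pscal{\bsmall_\varepsilon(x,u(x))}{\nabla u(x)}\,dx.
\]
The left-hand side converges to $-\int \nabla\varphi\cdot\v\,dx$ by Lemma~\ref{l:divv}. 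For the gradient term on the right, Corollary~\ref{mmmmm} applied fiberwise at $t=u(x)$ gives $\bsmall_\varepsilon(x,u(x))\to\bsmall(x,u(x))$ for $\LLN$-a.e.\ $x$; since $\|\bsmall_\varepsilon\|_\infty\le\|\bsmall\|_\infty$ and $\nabla u\in L^1_{\rm loc}$, dominated convergence produces the limit $\int\varphi\,\pscal{\bsmall(x,u(x))}{\nabla u(x)}\,dx$, which is exactly the $\LLN$-absolutely continuous term on the right-hand side of \eqref{f:chainw}.

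The main obstacle is to show that the first term on the right converges to $\int\varphi(y)\,F(y,\ut(y))\,d\sigma(y)$. Using $\Div_x\B(\cdot,t)=F(\cdot,t)\,\sigma$ and Fubini, that integral equals $\int h_\varepsilon\,d\sigma$ where
\[
h_\varepsilon(y):=\int_\Omega \varphi(x)\,F(y,u(x))\,\rho_\varepsilon(x-y)\,dx.
\]
The Lipschitz estimate \eqref{f:estiv} yields $|F(y,u(x))-F(y,\ut(y))|\le |u(x)-\ut(y)|$, so
\[
\bigl|h_\varepsilon(y)-F(y,\ut(y))\,(\rho_\varepsilon*\varphi)(y)\bigr|
\le \|\varphi\|_\infty \int_\Omega |u(x)-\ut(y)|\,\rho_\varepsilon(x-y)\,dx.
\]
For $u\in W^{1,1}_{\rm loc}\cap L^\infty_{\rm loc}$ one has $J_u=\emptyset$ and $\hh(S_u)=0$, so the precise representative $\ut$ exists and $y$ is an approximate continuity point of $u$ for $\hh$-a.e.\ $y$, hence for $\sigma$-a.e.\ $y$ since $\sigma\ll\hh$; for such $y$ the right-hand side above tends to zero by the very definition of approximate continuity, while $(\rho_\varepsilon*\varphi)(y)\to\varphi(y)$, so $h_\varepsilon(y)\to \varphi(y)\,F(y,\ut(y))$ $\sigma$-a.e. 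Since $|h_\varepsilon|\le \|\varphi\|_\infty\|u\|_{L^\infty(K)}$ and $h_\varepsilon$ is supported in a fixed compact set $K\Subset\Omega$ of finite $\sigma$-measure, dominated convergence delivers the desired limit, identifying $\Div\v$ as the measure in \eqref{f:chainw}.
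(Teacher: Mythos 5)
Your proposal is correct, and the overall skeleton (mollify $\B$ in $x$, write the chain rule for $\v_\varepsilon$, pass to the limit) is the same as the paper's; the difference lies in how you handle the crucial term $\int\varphi\,(\Div_x\B_\varepsilon)(x,u(x))\,dx$. The paper slices by levels: it writes $F(x,u(x))=\int_0^{u(x)}f(x,t)\,dt$, reduces the problem to the convergence of $\rho_\varepsilon\ast(\phi\chi_{\{u>t\}})$ to $\phi\chi^*_{\{u>t\}}$ $\H^{N-1}$-a.e.\ (\cite[Corollary 3.80]{AFP}), and then reassembles by Fubini. You instead push the mollifier onto the measure side, obtaining $\int h_\varepsilon\,d\sigma$ with $h_\varepsilon(y)=\int\varphi(x)F(y,u(x))\rho_\varepsilon(x-y)\,dx$, and exploit the $1$-Lipschitz dependence of $F(y,\cdot)$ on $t$ from \eqref{f:estiv} together with the fact that a $W^{1,1}_{\rm loc}\cap L^\infty_{\rm loc}$ function is approximately continuous outside an $\H^{N-1}$-null (hence $\sigma$-null) set. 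This is more direct and avoids the coarea decomposition entirely; its price is that it uses the $W^{1,1}$ regularity in an essential way (in the $BV$ case of Theorem~\ref{chainb4} the level-set argument is what survives, with $u^*$ replacing $\ut$), whereas the paper's computation is the template for the general case. Two small points to tidy up: the claim ``$J_u=\emptyset$'' should read $\H^{N-1}(J_u)=0$ (what you actually use, $\H^{N-1}(S_u)=0$, is the correct statement, via $D^ju=0$ and the Federer--Vol'pert theorem); and to apply $|F(y,u(x))-F(y,\ut(y))|\le|u(x)-\ut(y)|$ for a fixed $y$ and all $x$ simultaneously you need a single $\sigma$-null exceptional set valid for all pairs $(t,s)$, which you get by running \eqref{f:estiv} over a countable dense set of levels and using the continuity of $F(y,\cdot)$, exactly as in Lemma~\ref{l:lebF}. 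Neither remark affects the validity of the argument.
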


\begin{remark}\label{r:not}
With some abuse of notation, we will also write equation \eqref{f:chainw}
as
\begin{equation}\label{lolo33}
\Div\,\v=\left.\Div_x\B(x,t)\right|_{t=\ut(x)}+\pscal{\partial_t\B(x, u(x))}{\nabla u(x)}\mathcal L^N\,.
\end{equation}
\end{remark}

\begin{proof}
By Lemma~\ref{l:divv}
the function $\v$ belongs to $\DMloc$ and 
satisfies \eqref{f:boundv}.

Since all results are of local nature in the space variables,
it is not restrictive to assume that
$\Omega = \R^N$, $\bsmall$ is a bounded Borel function,
and $\bsmall(\cdot, t) \in \DM$ for every $t\in\R$.

We will use a regularization argument
as in \cite[Theorem~3.4]{DCFV2}.
More precisely, 
as in Lemma~\ref{l:divv} let
\(\B_ \varepsilon(\cdot, t) := \rho_ \varepsilon \ast \B(\cdot, t)\)
and \(\v_\varepsilon(x) :=\B_ \varepsilon(x,u(x))\).
Since $\B_\varepsilon$ is a Lipschitz function in $(x,t)$, by using the chain rule formula of Ambrosio and Dal Maso
(see \cite[Theorem 3.101]{AFP}) one has
\begin{equation}\label{alpha1}
\begin{split}
\int_{\R^{N}} \pscal{\nabla\phi(x)}{\v_\varepsilon(x)}\, dx = {} &
-\int_{\R^{N}} \phi(x) \Div_x \B_ \varepsilon (x,u(x))\,dx
\\ & -
\int_{\R^{N}} \phi(x)\, \pscal{\bsmall_\varepsilon(x, u(x))}{\nabla u(x)}\, dx,
\end{split}
\end{equation}
and the claim will follow by passing to the limit as \( \varepsilon\to 0^+\).  

Namely, by Lemma \ref{l:divv},
\(\v_\varepsilon(x) \to \v(x)\) for $\mathcal L^N$--a.e.\ $x\in\R^N$.
Then by Dominated Convergence Theorem we have
\begin{equation}\label{alpha2}
\lim_{ \varepsilon\to 0^+} \int_{\R^{N}} \pscal{\nabla\phi(x)}{\v_ \varepsilon(x)}\, dx =\int_{\R^{N}} \pscal{\nabla\phi(x)}{\v(x)}\, dx \,.
\end{equation}
This is equivalent to say
\begin{equation}\label{sinistra1}
\Div [\B_\varepsilon(x,u(x))]\,\mathcal L^N \overset{*}{\rightharpoonup} 
\Div\v (x)
\,,\quad     \hbox{ as } \varepsilon\to 0^+
\end{equation}
in the weak${}^*$ sense of measures.

Similarly, from Lemma~\ref{l:Beps}(a) we have that,
for $\mathcal L^N$--a.e.\ $x\in\R^N$,
\[
\lim_{ \varepsilon\to 0^+} \bsmall_\varepsilon(x,u(x)) = \bsmall(x,u(x))\,.
\]
Then by (ii) and the Dominated Convergence Theorem we have
\begin{equation}\label{alpha3}
\lim_{ \varepsilon\to 0^+} \int_{\R^{N}} 
\phi(x)\,
\pscal{\bsmall_ \varepsilon(x, u(x))}{\nabla u(x)}\, dx 
=\int_{\R^{N}} 
\phi(x)\,
\pscal{\bsmall(x, u(x))}{\nabla u(x)}\, dx \,.
\end{equation}

It remains to prove that
\begin{equation}\label{f:ie}
I_\varepsilon := 
\int_{\R^{N}} \phi(x) \Div_x \B_\varepsilon (x,u(x))\,dx
\ \xrightarrow{\varepsilon\to 0}\
\int_{\R^N} \phi(x) F(x, \ut(x))\, d\sigma(x).
\end{equation}
Assume first that \(u\geq 0\) and let \(C > \|u\|_\infty\).
Let us rewrite $I_\varepsilon$ in the following way: 
\[
\begin{split}
I_\varepsilon & =
\int_{\R^N} \phi(x) \int_0^{u(x)} \Div_x \bsmall_\varepsilon(x,t)\, dt \, dx
\\ & =
\int_0^C dt \int_{\R^N} \phi(x) \chi_{\{u > t\}}(x)
\int_{\R^N} \rho_\varepsilon(x-y)\, d\Div_y\bsmall(y,t)
\\ & =
\int_0^C dt \int_{\R^N} \rho_\varepsilon\ast (\phi \chi_{\{u > t\}})(y)
\, d\Div_y\bsmall(y,t)\,.
\end{split}
\]
By Corollary 3.80 in \cite{AFP} it holds
\[
\rho_\varepsilon\ast (\phi \chi_{\{u > t\}})(x) \to
\phi(x) \chi^*_{\{u > t\}}(x)
= \phi(x) \chi_{\{\ut > t\}}(x)
\qquad
\text{for\ \(\H^{N-1}\)-a.e.}\ x
\]
(hence for \(\Div_x \bsmall(\cdot, t)\)-a.e.\ \(x\)).
Passing to the limit as \(\varepsilon\to 0\) we get
\[
\begin{split}
\lim_{\varepsilon\to 0} I_\varepsilon & =
\int_0^C dt \int_{\R^N} \phi(x) \chi_{\{\ut > t\}}(x)
\, d\Div_x\bsmall(x,t)
\\ & =
\int_{\R^N} \phi(x) \int_0^{\ut(x)}
\, f(x,t)\, dt \, d\sigma(x)
\\ & =
\int_{\R^N} \phi(x) F(x, \ut(x))\, d\sigma(x),
\end{split}
\]
hence \eqref{f:ie} is proved in the case $u\geq 0$.

The general case can be handled similarly.
Namely, the integral \(I_\varepsilon\) can be written as
\[
\begin{split}
I_\varepsilon & =
\int_{-C}^C dt \int_{\R^N} \phi(x) \chi_{{u,t}}(x)
\int_{\R^N} \rho_\varepsilon(x-y)\, d\Div_y\bsmall(y,t)
\\ & =
\int_{-C}^C dt \int_{\R^N} \rho_\varepsilon\ast (\phi \chi_{{u,t}})(y)
\, d\Div_y\bsmall(y,t)\,,
\end{split}
\]
where \(\chi_{u,t}\) is the characteristic function of the set
\[
\{x\in\R^N:\ t\ \text{belongs to the segment of endpoints $0$ and $u(x)$}\},
\]
and the limit as \(\varepsilon\to 0\) can be computed
exactly as in the previous case.
\end{proof}

\bigskip
We now state the main results of the paper; the proofs are collected at the end of the section.

\begin{theorem}[$\mathcal{DM}^{\infty}$-dependence and $u\in BV$]
\label{chainb4}
Let $\bsmall$
satisfy assumptions (i)--(iv),
let \(\B\) be defined by \eqref{f:B},
and let $u\in \BVLloc$.
Then the distribution $(\bsmall(\cdot,u), Du)$,
defined by
\begin{equation}
\label{f:bxu}
\begin{split}
 \langle(\bsmall(\cdot,u), Du),\varphi\rangle  := {} &
 -\frac{1}{2}\int_\Omega
\left[ F(x, u^+(x)) + F(x, u^-(x)) \right]\, \varphi(x)\, d\sigma(x)
\\ & -\int_\Omega \B(x,u(x))\cdot\nabla
    \varphi(x)\, dx,
\qquad \forall\varphi\in C_c^\infty(\Omega),
\end{split}
\end{equation}
is a Radon measure in $\Omega$, and satisfies
\begin{equation}\label{f:mubdd}
|(\bsmall(\cdot,u), Du)|(E)\leq
\|\bsmall\|_\infty
|Du|(E),
\qquad
\text{for every Borel set}\
E\subset\Omega.
\end{equation}
In other words,
the composite function $\v\colon\Omega\rightarrow\R^N$,
defined by
$\v(x):=\B(x,u(x))$,
belongs to $\DMloc$,
and the following equality holds in the sense of measures:
\begin{equation}\label{lolo}
\begin{split}
\Div\,\v =
\frac{1}{2}\left[ F(x, u^+(x)) + F(x, u^-(x)) \right]\, \sigma
+(\bsmall(\cdot,u), Du).
\end{split}
\end{equation}
\end{theorem}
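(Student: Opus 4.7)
The equality \eqref{lolo} is equivalent, via integration by parts $\bigl(\int_\Omega \langle \nabla\varphi, \v\rangle\, dx = -\int_\Omega \varphi\, d(\Div \v)$, available since $\v \in \DMloc$ by Lemma~\ref{l:divv}$\bigr)$, to
\[
(\bsmall(\cdot, u), Du) = \Div \v - \frac{1}{2}\bigl[F(x, u^+(x)) + F(x, u^-(x))\bigr]\, \sigma
\]
in the sense of distributions. I would therefore first verify that $x \mapsto F(x, u^\pm(x))$ is $\sigma$-measurable and bounded by $\|u\|_\infty$: the bound is immediate from \eqref{f:estiv} together with $F(\cdot, 0)\equiv 0$, while the measurability follows by fixing representatives at the Lebesgue points given by Lemma~\ref{l:lebF} and invoking the Lipschitz $t$-dependence of $F$. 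The displayed right-hand side is then a difference of two Radon measures, which simultaneously shows that $(\bsmall(\cdot, u), Du)$ is a Radon measure and that \eqref{lolo} holds.

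For the bound \eqref{f:mubdd}, I would use mollification in $u$: set $u_\eta := \rho_\eta * u$ and $\v_\eta(x) := \B(x, u_\eta(x))$. Theorem~\ref{chainb3} applied to the smooth $u_\eta$ gives $(\bsmall(\cdot, u_\eta), Du_\eta) = \langle \bsmall(x, u_\eta), \nabla u_\eta\rangle\, \LLN$, a measure whose total variation is bounded by $\|\bsmall\|_\infty |\nabla u_\eta|\, \LLN$. The $L^1_{\rm loc}$ convergence $\v_\eta \to \v$ from Lemma~\ref{l:divv}, combined with dominated convergence applied to $F(\cdot, u_\eta)\sigma$ (whose $\sigma$-a.e.\ limit equals $F(\cdot, \ut)$ off $S_u$ and $F(\cdot, (u^++u^-)/2)$ on $J_u$, by Proposition~\ref{lolol} and symmetry of $\rho$), then yields $(\bsmall(\cdot, u_\eta), Du_\eta) \weakstarto \widetilde\mu$ for a Radon measure $\widetilde\mu$. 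Lower semicontinuity of total variation together with the convergence $\int_A |\nabla u_\eta|\, dx \to |Du|(A)$ for open sets $A$ with $|Du|(\partial A)=0$ gives $|\widetilde\mu|(A) \leq \|\bsmall\|_\infty |Du|(A)$, and hence the same bound for every Borel set by regularity.

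The main obstacle is that $\widetilde\mu$ does not coincide with $(\bsmall(\cdot, u), Du)$ in general: their difference, concentrated on $J_u$, equals $\bigl[\frac{1}{2}(F(u^+)+F(u^-)) - F((u^++u^-)/2)\bigr]\sigma$. Off $J_u$ the two agree and the bound is done. On $J_u$ I would argue directly using the weak normal traces of Section~\ref{distrtraces}. Identifying the interior and exterior traces of $\v$ on $J_u$ with $\beta^\pm(\cdot, u^\pm(\cdot))$ (using the joint regularity provided by Proposition~\ref{p:lip}), formula \eqref{mmm} applied to $\v$ gives $\Div \v\res J_u = [\beta^+(\cdot, u^+) - \beta^-(\cdot, u^-)]\, \hh\res J_u$, while \eqref{mmm} applied to $\B(\cdot, t)$ at fixed $t = u^\pm(x)$ yields $F(x, u^\pm(x))\, \sigma\res J_u = [\beta^+(\cdot, u^\pm) - \beta^-(\cdot, u^\pm)]\, \hh\res J_u$. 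Subtracting produces
\[
(\bsmall(\cdot, u), Du)\res J_u = \bigl[\beta^*(\cdot, u^+) - \beta^*(\cdot, u^-)\bigr]\, \hh\res J_u, \qquad \beta^* := \tfrac{1}{2}(\beta^+ + \beta^-),
\]
and \eqref{f:lip} immediately gives $|(\bsmall(\cdot, u), Du)\res J_u|(E) \leq \|\bsmall\|_\infty \int_{E\cap J_u}|u^+ - u^-|\, d\hh = \|\bsmall\|_\infty |Du|\res J_u(E)$, which combined with the diffuse bound concludes \eqref{f:mubdd}.
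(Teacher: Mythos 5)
Your proof is correct, and on the crucial point it takes a genuinely different route from the paper's. The paper also mollifies $u$ and passes to the limit in the identity of Theorem~\ref{chainb3} applied to $u_\varepsilon=\rho_\varepsilon\ast u$, but it never splits off the jump set: it claims directly that $\int\varphi\,F(x,u_\varepsilon(x))\,d\sigma\to\int\varphi\,\tfrac12[F(x,u^+)+F(x,u^-)]\,d\sigma$, by first identifying the limit with $\int\varphi\,F(\cdot,u^*)\,d\sigma$ and then converting $F(x,u^*(x))=\int_0^{u^*(x)}f(x,w)\,dw$ into the symmetric average via the level--set identity of \cite{DCFV2}*{Lemma 2.2}; with that, the measure you call $\widetilde\mu$ coincides with $(\bsmall(\cdot,u),Du)$ outright and the single $\liminf$ estimate on $\int_{K_r}|\nabla u_\varepsilon|$ yields \eqref{f:mubdd} everywhere, with no separate argument on $J_u$. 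Your worry that $F(x,u^*(x))$ and $\tfrac12[F(x,u^+(x))+F(x,u^-(x))]$ differ on $J_u$ for nonlinear $F(x,\cdot)$ is in fact well placed: the identity of \cite{DCFV2} concerns the precise representative $\chi^*_{\{u>w\}}$ of the superlevel sets of $u$, not the indicator of $\{u^*>w\}$, and the two differ precisely on $J_u$ (take $u=\chi_E$ and $w\in(0,\tfrac12)$), so the discrepancy measure $[\tfrac12(F(u^+)+F(u^-))-F(u^*)]\,\sigma\res J_u$ you isolate is genuinely nonzero in general. Your resolution --- computing $(\bsmall(\cdot,u),Du)\res J_u=[\beta^*(\cdot,u^+)-\beta^*(\cdot,u^-)]\,\hh\res J_u$ from the normal traces and \eqref{mmm}, and estimating it by $\|\bsmall\|_\infty|u^+-u^-|\,\hh\res J_u=\|\bsmall\|_\infty|D^ju|$ via \eqref{f:lip} --- is sound, and it anticipates what the paper proves later as Proposition~\ref{p:traces} and Step~2 of Theorem~\ref{chainb5}. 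Two caveats for a clean write-up: since Proposition~\ref{p:traces} is stated in the paper as a consequence of Theorem~\ref{chainb4}, you must observe that its blow-up proof only uses $\v\in\DMloc$, which is already supplied by Lemma~\ref{l:divv}, so no circularity arises; and the substitution $t=u^\pm(x)$ in \eqref{mmm} should be justified by taking a countable dense set of values of $t$ and using the uniform Lipschitz dependence on $t$ of $F(x,\cdot)$ (from \eqref{f:estiv}) and of $\beta^\pm(x,\cdot)$ (from Proposition~\ref{p:lip}), exactly as you indicate.
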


\begin{remark}
The measure $(\bsmall(\cdot,u), Du)$
extends the notion of pairing defined by Anzellotti \cite{Anz},
in the case $\bsmall(x, w) = \A(x)$,
with $\A\in\DM$.
\end{remark}

\begin{proposition}[Traces of the composite function]\label{p:traces}
Let the assumptions of Theorem~\ref{chainb4} hold,
and let $\Sigma\subset\Omega$ be a countably $\H^{N-1}$-rectifiable set,
oriented as in Section~\ref{distrtraces}.
Then the normal traces on $\Sigma$ of the composite function $\v\in\DMloc$, defined at \eqref{f:defv},
are given by
\begin{equation}
\label{f:tracesv}
\Trace{\v}{\Sigma} =
\begin{cases}
\beta^\pm(x, u^\pm(x)),
&\text{for $\H^{N-1}$-a.e.}\ x\in J_u,\\
\beta^\pm(x, \ut(x)),
&\text{for $\H^{N-1}$-a.e.}\ x\in \Sigma\setminus J_u,
\end{cases}
\end{equation}
where, for every $t\in\R$, $\beta^\pm(\cdot, t)$ are the normal traces of $\B(\cdot, t)$
on $\Sigma$ (see \eqref{mmm}).
With our convention $u^\pm(x) = \ut(x)$ if $x\in\Omega\setminus S_u$, \eqref{f:tracesv} can
be written as
$\Trace{\v}{\Sigma} = \beta^\pm(x, u^\pm(x))$ for $\H^{N-1}$-a.e.\ $x\in\Sigma$.
\end{proposition}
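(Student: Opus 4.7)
The plan is a localization plus linearization argument that identifies the trace by comparing $\v$ to the ``frozen'' field $\B(\cdot, u^-(x_0))$. By the local construction in Section~\ref{distrtraces}, it suffices to fix a piece $N_i\subseteq\Sigma\cap\Sigma_i$ with associated $C^1$ domain $\Omega_i$ whose outer normal equals $\nu_\Sigma$ on $N_i$, and to prove that
$$\Tr(\v,\partial\Omega_i)(x_0) = \beta^-(x_0, u^-(x_0))$$
for $\H^{N-1}$-a.e.\ $x_0\in N_i$; the statement about $\Tr^+$ is obtained analogously via $\Omega_i'$. Here $u^-(x_0)$ is the approximate inner limit of $u$ from $\Omega_i$, which equals $\ut(x_0)$ when $x_0\notin J_u$ and the jump value from the $\Omega_i$-side when $x_0\in J_u\cap N_i$.

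I would then fix a ``good'' point $x_0\in N_i$: $\H^{N-1}$-a.e.\ $x_0\in N_i$ is simultaneously a one-sided Lebesgue point of $u$ from $\Omega_i$, a Lebesgue point of $\Tr(\v,\partial\Omega_i)$, a Lebesgue point of the Lipschitz representatives $\beta^-(\cdot,t)$ of Proposition~\ref{p:lip} for every $t\in\R$, and a point where the restrictions of $\sigma$ and $|Du|$ to $\Omega_i$ have density $o(r^{N-1})$. Set
$$\w(x) := \v(x) - \B(x, u^-(x_0)) = \int_{u^-(x_0)}^{u(x)}\bsmall(x,s)\,ds,$$
which belongs to $\DMloc$ as the difference of two such fields and satisfies $|\w(x)|\leq \|\bsmall\|_\infty\,|u(x)-u^-(x_0)|$. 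Since $\B(\cdot,u^-(x_0))$ has normal trace $\beta^-(\cdot,u^-(x_0))$ on $\partial\Omega_i$ by definition \eqref{f:beta}, the linearity of the normal trace gives
$$\Tr(\v,\partial\Omega_i)(x_0) = \beta^-(x_0,u^-(x_0)) + \Tr(\w,\partial\Omega_i)(x_0),$$
reducing the proof to showing $\Tr(\w,\partial\Omega_i)(x_0)=0$.

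To prove the vanishing, I test against a cutoff $\varphi_r\in C^1_c(B_r(x_0))$ with $\varphi_r\equiv 1$ on $B_{r/2}(x_0)$ and $|\nabla\varphi_r|\leq C/r$, using the Anzellotti integration-by-parts identity
$$\int_{\partial\Omega_i}\varphi_r\,\Tr(\w,\partial\Omega_i)\,d\H^{N-1} = \int_{\Omega_i}\nabla\varphi_r\cdot\w\,dy + \int_{\Omega_i}\varphi_r\,d\Div\w.$$
By Theorem~\ref{chainb4} and \eqref{f:estiv},
$$\Div\w = \tfrac12[F(\cdot,u^+)+F(\cdot,u^-)-2F(\cdot,u^-(x_0))]\,\sigma + (\bsmall(\cdot,u),Du),$$
with pointwise bounds $|F(\cdot,u^\pm)-F(\cdot,u^-(x_0))|\leq |u^\pm-u^-(x_0)|$ ($\sigma$-a.e.) and $|(\bsmall(\cdot,u),Du)|\leq\|\bsmall\|_\infty|Du|$ from \eqref{f:mubdd}. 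The one-sided $L^1$-smallness of $u-u^-(x_0)$ combined with the density bounds on $\sigma$ and $|Du|$ forces both terms on the right-hand side to be $o(r^{N-1})$, while the left-hand side behaves like $\Tr(\w,\partial\Omega_i)(x_0)\,\omega_{N-1}r^{N-1}+o(r^{N-1})$ at a Lebesgue point; dividing by $r^{N-1}$ and sending $r\to 0^+$ yields $\Tr(\w,\partial\Omega_i)(x_0)=0$.

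The main technical obstacle is the density bound for $|Du|$ at $\H^{N-1}$-a.e.\ point of $J_u\cap N_i$: one must exclude concentration of the jump part $D^ju$ on $\partial\Omega_i$ from the $\Omega_i$-side. The key observation is that at $\H^{N-1}$-a.e.\ $x_0\in J_u\cap N_i$ the approximate tangent plane of the rectifiable set $J_u$ coincides with that of $\partial\Omega_i$, so that $J_u\cap\Omega_i$ has $\H^{N-1}$-density zero at $x_0$ and hence $|D^ju|(B_r(x_0)\cap\Omega_i)=o(r^{N-1})$; the absolutely continuous and Cantor contributions are controlled by standard differentiation of Radon measures. The analogous density bound for $\sigma\res\Omega_i$ follows from the same rectifiability considerations, using $\sigma\ll\H^{N-1}$ and the fact that at $\H^{N-1}$-a.e.\ $x_0\in N_i$ the carrier of $\sigma\res\Omega_i$ has vanishing density along the $\Omega_i$-side.
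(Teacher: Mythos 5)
Your proposal follows essentially the same route as the paper's proof: localize to a good point $x_0\in N_i$, compare $\v$ with the frozen field $\B(\cdot,u^-(x_0))$, test the Gauss--Green identity against rescaled cutoffs, and show that both the volume term and the divergence term are $o(r^{N-1})$, the former via the one-sided Lebesgue point property of $u$ and the Lipschitz bound $|\v-\B(\cdot,u^-(x_0))|\le\|\bsmall\|_\infty|u-u^-(x_0)|$.

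The one place you diverge is the treatment of $\Div\w$. The paper bounds $\int_{\Omega_i}\varphi_r\,d\Div\v$ directly by $|\Div\v|\res\Omega_i(B_r(x_0))=o(r^{N-1})$, which holds at $\H^{N-1}$-a.e.\ $x_0\in N_i$ simply because $|\Div\v|\res\Omega_i$ is a Radon measure giving zero mass to $N_i\subset\partial\Omega_i$ (\cite[Theorem~2.56]{AFP}); the same argument handles $\sigma\res\Omega_i$. You instead expand $\Div\v$ via Theorem~\ref{chainb4} and must then establish the density bounds for $\sigma\res\Omega_i$ and $|Du|\res\Omega_i$ separately. That works, but your justification of $|D^ju|(B_r(x_0)\cap\Omega_i)=o(r^{N-1})$ is a detour with a non sequitur: coincidence of the approximate tangent planes of $J_u$ and $\partial\Omega_i$ at $\H^{N-1}$-a.e.\ point of $J_u\cap N_i$ does not by itself control the part of $J_u$ lying \emph{inside} $\Omega_i$ near $x_0$. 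The clean (and sufficient) argument is again the standard density theorem applied to the Radon measure $|Du|\res\Omega_i$, which vanishes on $N_i$; with that substitution your proof is complete. A last cosmetic point: for the divergence term you do not need any smallness of $u-u^\pm(x_0)$ with respect to $\sigma$ (which you could not guarantee anyway, as $x_0$ need not be a $\sigma$-Lebesgue point of $u$); the crude bound $|F(\cdot,u^\pm)-F(\cdot,u^-(x_0))|\le 2\|u\|_\infty$ together with $\sigma\res\Omega_i(B_r(x_0))=o(r^{N-1})$ already suffices.
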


\begin{theorem}[Representation of the pairing measure]\label{chainb5}
Let the assumptions of Theorem~\ref{chainb4} hold,
and consider the standard decomposition of 
the measure
$\mu := (\bsmall(\cdot,u), Du)$ as
\[
\mu=\mu^{ac}+\mu^c+\mu^j,\qquad 
\mu^{d}:=\mu^{ac}+\mu^{c}.
\]
Then
\begin{gather*}
\mu^{ac}=\pscal{\bsmall(x, \ut(x))}{\nabla u(x)}\mathcal L^N,
\\
\mu^j=
[\beta^*(x,u^+(x))
-\beta^*(x,u^-(x))]
\hh\res {J_u},
\end{gather*}
where, for every $t\in\R$, $\beta^\pm(\cdot, t)$ are the normal traces of $\B(\cdot, t)$
on $J_u$ and $\beta^*(\cdot, t) := [\beta^+(\cdot, t) + \beta^-(\cdot, t)]/2$.

Moreover, if 
there exists a countable dense set $Q \subset\R$ such that
\begin{equation}\label{f:assumpDc}
|D^c u| (S_{\bsmall(\cdot, t)}) = 0
\qquad
\forall t\in Q,
\end{equation}
then
\[
\mu^{d}=\pscal{\widetilde{\bsmall}(x, \ut(x))}{D^d u}.
\]
Therefore, under this additional assumption
the following equality holds in the sense of measures:
\begin{equation}\label{lolo1}
\begin{split}
\Div\,\v = {} &
\frac{1}{2}\left[ F(x, u^+(x)) + F(x, u^-(x)) \right]\, \sigma
\\
& +\pscal{\widetilde{\bsmall}(x, \ut(x))}{D^d u}
+[\beta^*(x,u^+)
-\beta^*(x,u^-)]
\hh\res {J_u}.
\end{split}
\end{equation}
\end{theorem}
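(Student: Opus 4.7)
The plan is to start from formula \eqref{lolo} of Theorem~\ref{chainb4}, rewritten as
\[
\mu = \Div\v - \tfrac{1}{2}\bigl[F(\cdot,u^+)+F(\cdot,u^-)\bigr]\,\sigma,
\]
and to identify $\mu^{ac}$, $\mu^j$ and (under \eqref{f:assumpDc}) $\mu^c$ separately by matching both sides on the corresponding part of $\Omega$. The key structural observation is that $\sigma\ll\hh$ (noted just after assumption~(iv)), so $\sigma$ is singular with respect to $\LLN$; hence the $F\sigma$ term contributes nothing to the $\LLN$-absolutely continuous part, and on $J_u$ it has an explicit trace representation from \eqref{mmm}.

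For the jump part I would apply Proposition~\ref{p:traces} with $\Sigma=J_u$ to obtain $\Trp{\v}{J_u}(x)=\beta^+(x,u^+(x))$ and $\Trm{\v}{J_u}(x)=\beta^-(x,u^-(x))$ $\hh$-a.e.\ on $J_u$. Formula \eqref{mmm} applied to $\v$ then gives
\[
\Div\v\res J_u=\bigl[\beta^+(x,u^+(x))-\beta^-(x,u^-(x))\bigr]\,\hh\res J_u.
\]
Applying \eqref{mmm} to $\B(\cdot,t)$ and using $F(\cdot,t)\sigma=\Div_x\B(\cdot,t)$ yields, for every fixed $t\in\R$, the identity $F(\cdot,t)\sigma\res J_u=[\beta^+(\cdot,t)-\beta^-(\cdot,t)]\hh\res J_u$. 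The Lipschitz continuity of $t\mapsto F(\cdot,t)$ (Lemma~\ref{l:B}) and $t\mapsto\beta^\pm(\cdot,t)$ (Proposition~\ref{p:lip}), combined with a countable dense set in $\R$ and the Borel measurability of $u^\pm$, upgrades this to $F(x,u^\pm(x))\sigma\res J_u=[\beta^+(x,u^\pm(x))-\beta^-(x,u^\pm(x))]\hh\res J_u$. Subtracting from $\Div\v\res J_u$ and regrouping the four $\beta$-terms produces the announced formula $\mu^j=[\beta^*(\cdot,u^+)-\beta^*(\cdot,u^-)]\,\hh\res J_u$.

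For the $\LLN$-absolutely continuous part, the singularity of $\sigma$ gives $\mu^{ac}=(\Div\v)^{ac}$, so the task is to compute the Lebesgue density of $\Div\v$. I would use the mollification $\v_\eps(x):=\B(x,u_\eps(x))$ with $u_\eps:=\rho_\eps*u\in W^{1,1}_{\mathrm{loc}}$, for which Theorem~\ref{chainb3} yields
\[
\Div\v_\eps = F(\cdot,u_\eps)\,\sigma + \pscal{\bsmall(\cdot,u_\eps)}{\nabla u_\eps}\,\LLN.
\]
The first summand is $\LLN$-singular. For the second, splitting $\nabla u_\eps=\rho_\eps*D^a u+\rho_\eps*D^s u$ I would show that the first piece converges to $\nabla u$ in $L^1_{\mathrm{loc}}$ and, together with $u_\eps\to\ut$ $\LLN$-a.e.\ and the continuity of $\bsmall(x,\cdot)$, produces the absolutely continuous limit $\pscal{\bsmall(\cdot,\ut)}{\nabla u}\LLN$; the second piece remains $L^1$-bounded but concentrates on the support of $D^s u$ and therefore contributes only to the singular part. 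Uniqueness of the Lebesgue decomposition of $\Div\v$ (together with the uniform bound of Lemma~\ref{l:divv}) then identifies $\mu^{ac}=\pscal{\bsmall(\cdot,\ut)}{\nabla u}\LLN$.

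Finally, for the Cantor part under assumption~\eqref{f:assumpDc}, the preceding two steps reduce the claim to $\mu^c=\pscal{\widetilde{\bsmall}(\cdot,\ut)}{D^c u}$, and \eqref{lolo1} follows by substitution into \eqref{lolo}. Since $|D^c u|(S_u)=0$, $\ut$ is defined $|D^c u|$-a.e.; \eqref{f:assumpDc} together with Proposition~\ref{p:ScorzaDragoni} (applied to $\bsmall$) provides the approximate limit $\widetilde{\bsmall}(\cdot,t)$ $|D^c u|$-a.e.\ for every $t$ in the countable dense set $Q$, and the continuity of $\bsmall(x,\cdot)$ propagates this to all $t\in\R$, yielding a Borel representative $x\mapsto\widetilde{\bsmall}(x,\ut(x))$ defined $|D^c u|$-a.e. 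A blow-up argument at $|D^c u|$-typical points $x_0$, freezing $\bsmall(x,u(x))\approx\widetilde{\bsmall}(x,\ut(x_0))$ locally and comparing with the $W^{1,1}$ formula of Theorem~\ref{chainb3}, then gives the Cantor density. I expect this last step to be the main obstacle: one must couple the pure singularity of $D^c u$, which ignores $\LLN$-null sets but charges no set of finite $\hh$-measure, with the merely approximate $x$-continuity of $\bsmall$, and \eqref{f:assumpDc} is tailored precisely so that the pointwise value $\widetilde{\bsmall}(x,\ut(x))$ makes sense along $\mathrm{supp}(D^c u)$ and is stable under the regularization used.
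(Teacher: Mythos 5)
Your treatment of the jump part is sound and is essentially the paper's: Proposition~\ref{p:traces} with $\Sigma=J_u$, formula \eqref{mmm} applied both to $\v$ and to $\B(\cdot,t)$, a countable dense set of $t$'s together with the Lipschitz dependence of $F(\cdot,t)$ and $\beta^\pm(\cdot,t)$ on $t$ to evaluate at $t=u^\pm(x)$, and the regrouping into $\beta^*$. The absolutely continuous part, however, rests on a false premise. You claim that $\sigma\ll\hh$ forces $\sigma$ to be singular with respect to $\LLN$, and you call this the key structural observation. It is not true: $\LLN$ itself is absolutely continuous with respect to $\hh$ (an $\hh$-null set is $\LLN$-null), so $\sigma\ll\hh$ is perfectly compatible with $\sigma$ having a nontrivial, or even purely, Lebesgue-absolutely-continuous part. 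Concretely, for $\B(x,t)=t\,\A(x)$ with $\A$ smooth and $\diver\A\not\equiv 0$ one has $\sigma=|\diver\A|\,\LLN$. Hence both of your claims ``$\mu^{ac}=(\Div\v)^{ac}$'' and ``the summand $F(\cdot,u_\eps)\,\sigma$ is $\LLN$-singular'' are wrong. The two errors happen to cancel in the final formula, because $(\Div\v)^{ac}$ contains the extra term $F(\cdot,\ut)\,\sigma^{ac}$ which is exactly what you incorrectly discard from $\tfrac12[F(\cdot,u^+)+F(\cdot,u^-)]\,\sigma$, but as written the argument computes neither quantity correctly. The paper sidesteps the decomposition of $\sigma$ altogether: it differentiates the measure $\mu$ itself (not $\Div\v$) with respect to $\LLN$, using the representation
\[
\langle\mu,\phi\rangle=\lim_{\eps\to0}\int \phi(x)\,\pscal{\bsmall(x,u_\eps(x))}{\nabla u_\eps(x)}\,dx
\]
obtained in the proof of Theorem~\ref{chainb4}, splitting $\nabla u_\eps=\rho_\eps*\nabla u+\rho_\eps* D^su$ and killing the second piece at points where $|D^su|(B_r(x))=o(r^N)$. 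You should either adopt this route or repair yours by tracking the $F\sigma^{ac}$ cancellation explicitly.

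The diffuse part is the other gap: you reduce it to ``a blow-up argument at $|D^cu|$-typical points'' and yourself flag it as the main obstacle, but you never carry it out, so this piece of the theorem is not proved. The paper's Step 4 does it with the same differentiation technique as for $\mu^{ac}$, now with respect to $|D^du|$: after showing (its Step 3) that $\widetilde{\bsmall}(x,t)$ exists for every $t\in\R$ and every $x$ outside a set $S$ with $|D^cu|(S)=0$, and depends continuously on $t$ there, it differentiates the limit representation above at $|D^du|$-Lebesgue points of $y\mapsto\pscal{\widetilde{\bsmall}(y,\ut(y))}{\theta(y)}$, splitting $\nabla u_\eps=\rho_\eps*D^du+\rho_\eps*D^ju$ and controlling the jump piece by $\rho_\eps*|D^ju|$ at points where $|D^ju|(B_r(x))/|Du|(B_r(x))\to0$. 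Note that this yields $\mu^{d}$ in one stroke rather than $\mu^{c}$ separately; if you insist on isolating $\mu^c$ you must still justify why the mollified pairing against $\rho_\eps*D^du$ converges, which is precisely where \eqref{f:assumpDc} and the pointwise existence of $\widetilde{\bsmall}(\cdot,\ut(\cdot))$ enter.
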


\begin{remark}\label{r:ipoCantor}
	Since \(\LLN({S_{\bsmall(\cdot, t)}}) = 0\) for every $t\in\R$,
	assumption \eqref{f:assumpDc} 
	is equivalent to \(|D^d u|(S_{\bsmall(\cdot, t)}) = 0\)
	for every $t\in Q$.
	In particular, it is satisfied, for example,
	if \({S_{\bsmall(\cdot, t)}}\) is 
	%a countably \(\hh\)-rectifiable set
	%(this is always true if $N=1$),
	$\sigma$--finite with respect to $\H^{N-1}$, for every $t\in Q$
	(see \cite[Proposition~3.92(c)]{AFP}).
	This is always the case if 
	\(\bsmall(\cdot, t)\in BV_{{\rm loc}}(\Omega, \R^N)\cap L^\infty_{{\rm loc}}(\Omega, \R^N)\).
	Another relevant situation for which \eqref{f:assumpDc} holds
	happens when $D^c u = 0$,  
	i.e.\ if 
	\(u\) is a special function of bounded variation,
	e.g.\  
	if \(u\) is
	the characteristic function of a set of finite perimeter.
	%or, more in general, if \(u\) is a special function of bounded variation
	%(i.e.\ a \(BV\) function with \(D^c u = 0\)).
\end{remark}

\begin{remark}
For \(u\in BV_{{\rm loc}}(\Omega)\) we introduce the following notation:
\[
\begin{split}
\left.\Div_x \B(\cdot,t)\right|_{t=u(x)}
:= {} &
\frac12\left[\Div_x \B(\cdot,u^+(x))+\Div_x \B(\cdot,u^-(x))\right]
\\ := {} & 
\frac{1}{2}\left[ F(x, u^+(x)) + F(x, u^-(x)) \right]\, \sigma
\end{split}
\]
(see also Remark \ref{r:not}).
Then, with some abuse of notation, equation \eqref{lolo1}
can be written as
\begin{equation}\label{lolo1b}
\begin{split}
\Div\,\v=&
\left.\Div_x\B(x,t)\right|_{t=u(x)}\\
+&\pscal{\widetilde{\bsmall}(x, \ut(x))}{D^d u}
+[\beta^*(x,u^+)
-\beta^*(x,u^-)]
\hh\res {J_u}.
\end{split}
\end{equation}
\end{remark}

\begin{remark}[Anzellotti's pairing]
In the special case $\B(x,t) = t\, \A(x)$, with
$\A \in \DMloc$,
we have that
\[
\bsmall(x,t) = \A(x),
\quad
\sigma = |\Div\A|,
\quad
f(x,t) =\frac{d \Div\A}{d |\Div\A|}\,,
\quad
F(x,t) = t\, \frac{d \Div\A}{d |\Div\A|}\,,
\]
and formula \eqref{lolo} becomes
\[
\Div (u \A) = u^* \Div \A + (\A, Du),
\]
where $(\A, Du)$ is the Anzellotti's pairing.
\end{remark}

\bigskip

The remaining of this section is devoted to the proofs
of Theorem~\ref{chainb4}, 
Proposition~\ref{p:traces} and Theorem~\ref{chainb5}.

Since all results are of local nature in the space variables,
it is not restrictive to assume that
$\Omega = \R^N$, $\bsmall$ is a bounded Borel function,
and $\bsmall(\cdot, t) \in \DM$ for every $t\in\R$.

\begin{proof}[Proof of Theorem \ref{chainb4}]
By Lemma~\ref{l:divv}
the function $\v$ belongs to $\DMloc$ and 
satisfies~\eqref{f:boundv}.

We will use another regularization argument as in \cite{ChenFrid}.
More precisely, let \(u_ \varepsilon := \rho_ \varepsilon \ast u\)
be the standard regularization of \(u\),
and \(\v_ \varepsilon(x) := \B(x,u_ \varepsilon(x))\). 
Then, by 
Theorem~\ref{chainb3}, for any
$\phi\in C_0^1(\R^{N})$ we get
\begin{equation}\label{beta1}
\begin{split}
\int_{\R^{N}} \pscal{\nabla\phi(x)}{\v_ \varepsilon(x)}\, dx = {} &
-\int_{\R^{N}} \phi(x) \, \F(x, u_\varepsilon(x))\,   d\sigma(x)
\\ & -
\int_{\R^{N}} \phi(x)\, \pscal{\bsmall(x, u_ \varepsilon(x))}{\nabla u_ \varepsilon(x)}\, dx\,.
\end{split}
\end{equation}
Now we will pass to the limit as \( \varepsilon\to 0^+\) in each term.

STEP 1. Firstly, we note that
\begin{equation}\label{kiki}
\lim_{\varepsilon\to 0^+}\int_{\R^{N}}  \pscal{\nabla\phi(x)}{\v_ \varepsilon(x)}\, dx=
\int_{\R^{N}} \pscal{\nabla\phi(x)}{\v(x)}\, dx.
\end{equation}
Indeed, $u_ \varepsilon(x)\to u(x)$, as \( \varepsilon\to 0^+\), for a.e.\ $x$, $\B(x,\cdot)$ is Lipschitz continuous with Lipschitz constant independent of $x$ and $\B$ is locally bounded. 
Thus \eqref{kiki} holds by the Dominated Convergence Theorem.

STEP 2. We will prove that
\begin{equation}\label{kikinu}
\lim_{\varepsilon\to 0^+}\int_{\R^{N}} 
\phi(x) \, \F(x, u_ \varepsilon(x))\, d\sigma(x) =
\int_{\R^{N}} \phi(x)
\frac{1}{2}\left[
\F(x, u^+(x)) + \F(x, u^-(x))
\right]\, d\sigma(x).
\end{equation}
From \eqref{f:F} it holds
\begin{equation}\label{f:eq1}
\begin{split}
\int_{\R^{N}} \phi(x) \, \F(x,u_\varepsilon(x))\, d\sigma(x)
& = 
\int_{\R^{N}} \phi(x) \, \left[\int_{0}^{u_ \varepsilon(x)}
\f(x,w)\,dw\right]\,d\sigma(x)\,.
\end{split}
\end{equation}
Since  $u_ \varepsilon(x)\to u^*(x)$ for $\mathcal H^{N-1}$-a.e.\ $x$, 
and so also for $\sigma$-a.e.\ $x$ 
(since $\sigma\ll\mathcal H^{N-1}$), 
passing to the limit in \eqref{f:eq1} we obtain
\[
\lim_{\varepsilon\to 0^+}\int_{\R^{N}} 
\phi(x) \, \F(x, u_ \varepsilon(x))\, d\sigma(x) =
\int_{\R^{N}} \phi(x) \, \left[\int_{0}^{u^*(x)}
\f(x,w)\,dw\right]\,d\sigma(x) =: I\,.
\]
In the remaining part of the proof,
for the sake of simplicity we assume $u\geq 0$. 
We remark that the general case can be handled as
it has been illustrated at the end of the
proof of Theorem~\ref{chainb3}.

Let $C > \|u\|_{L^\infty(K)}$, where \(K\) is the support of \(\phi\). 
The integral \(I\) can be rewritten as
\[
I = 
\int_{0}^{C}\, \left[\int_{\R^{N}} \phi(x)
\chi_{\{u^* > w\}}(x)\f(x,w)\,d\sigma(x)\right]\,dw\,.
\]
On the other hand, for \(\mathcal{L}^1\)-a.e.\ \(w\in\R\) we have that
\[
\chi_{\{u^* > w\}} = 
\frac{1}{2}\left[\chi_{\{u^+ > w\}} + \chi_{\{u^- > w\}}\right],
\qquad \H^{N-1}\text{-a.e.\ (hence \(\sigma\)-a.e.) in}\ \R^N
\]
(see \cite[Lemma 2.2]{DCFV2}).
Hence we get
\[
\begin{split}
I & = 
\int_{0}^{C}\, \left[\int_{\R^{N}} \phi(x)
\frac12[\chi_{\{u^+>w\}}(x)+\chi_{\{u^->w\}}(x)]
\f(x,w)\,d\sigma(x)\right]\,dw
\\ & 
= 
\int_{\R^{N}} \phi(x)
\left[
\int_0^C \frac12[\chi_{\{u^+>w\}}(x)+\chi_{\{u^->w\}}(x)]
\f(x,w)\, dw
\right]\, d\sigma(x)
\\ &
=
\int_{\R^{N}} \phi(x)
\frac{1}{2}\left[
\F(x, u^+(x)) + \F(x, u^-(x))
\right]\, d\sigma(x)\,,
\end{split}
\]
so that \eqref{kikinu} is proved.

\smallskip

STEP 3. 
We claim that
the distribution $(\bsmall(\cdot,u), Du)$ defined at \eqref{f:bxu}
is a Radon measure,
satisfying \eqref{f:mubdd}
(and hence absolutely continuous with respect to $|Du|$).

For simplicity, let us denote by $\mu$ 
the distribution $(\bsmall(\cdot,u), Du)$ defined at \eqref{f:bxu}.
Since
\[
\mu = \Div\v-
\frac{1}{2}\left[
\F(x, u^+(x)) + \F(x, u^-(x))
\right]\, \sigma,
\]
by Lemma~\ref{l:divv} it is clear that
$\mu$ is a Radon measure and \eqref{lolo} holds. 
Moreover, by \eqref{beta1}, \eqref{kiki} and \eqref{kikinu} we have that,
for every $\phi\in C_c(\R^N)$,
\begin{equation}
\label{f:defmu}
\left\langle \mu,\phi \right\rangle=\lim_{\varepsilon\to 0^+}
\int_{\R^{N}} \phi(x)\, \pscal{\bsmall(x, u_ \varepsilon(x))}{\nabla u_ \varepsilon(x)}\, dx.
\end{equation}
Let us prove that \eqref{f:mubdd} holds.
Namely, let $U\subset\R^N$ be an open set, let $K\Subset U$
be a compact set,
and let $\phi\in C_c(\R^N)$ be a function
with support contained in $K$.
There exists $r_0>0$ such that
$K_r := K + B_r(0)\subset U$ for every $r\in (0,r_0)$.
Let $r\in(0, r_0)$ be such that $|Du|(\partial K_r) = 0$
(this property holds for almost every $r$).
Then
\[
|\pscal{\mu}{\phi}|
\leq \|\phi\|_\infty \|\bsmall\|_\infty
\liminf_{\varepsilon\to 0}\int_{K_r} |\nabla u_\varepsilon|\, dx
= \|\phi\|_\infty \|\bsmall\|_\infty
|Du| (K_r)
\leq \|\phi\|_\infty \|\bsmall\|_\infty
|Du| (U),
\]
hence
\[
|\mu|(K) \leq \|\bsmall\|_\infty
|Du| (U),
\]
so that \eqref{f:mubdd} follows
by the regularity of the Radon measures $|\mu|$ and $|Du|$.
\end{proof}

\bigskip

\begin{proof}[Proof of Proposition~\ref{p:traces}]
We will use the same notations of Section~\ref{distrtraces}.
It is not restrictive to assume that $J_u$ is oriented with $\nu_\Sigma$
on $J_u \cap \Sigma$.

Since, by Theorem~\ref{chainb4}, \(\v\in\DM\),
there exist the weak normal traces %\(w^\pm\in L^\infty(\Sigma)\) 
of \(\v\) on \(\Sigma\).
Let us prove \eqref{f:tracesv} for $\Tr^-$.

Let \(x\in \Sigma\) satisfy:
\begin{itemize}
	\item[(a)] 
	\(x\in (\R^N\setminus S_u)\cup J_u\),
	\(x\in N_i\) for some \(i\), the set \(N_i\) has density \(1\) at \(x\) 
	and \(x\) is a Lebesgue point of \(\beta^-(\cdot, t)\),
	with respect to
	\(\mathcal{H}^{N-1}\res\partial\Omega_i\),
	for every \(t\in\R\);
	% % %
	%{\color{red}CONTROLLARE!!!}
	
	\item[(b)]
	\(\sigma\res\Omega_i (B_\varepsilon(x)) = o(\varepsilon^{N-1})\) as \(\varepsilon\to 0\);
	%\(|\Div\B(\cdot, t)\res\Omega_i|(B_\varepsilon(x)) = o(\varepsilon^{N-1})\) as \(\varepsilon\to 0\),
	%for every \(t\in\R\);
	
	\item[(c)]
	\(|\Div\v|\res\Omega_i  (B_\varepsilon(x)) = o(\varepsilon^{N-1})\).
\end{itemize}
We remark that \(\H^{N-1}\)-a.e.\ \(x\in \Sigma\) satisfies these conditions.
In particular, (a) is satisfied thanks to Proposition~\ref{p:lip},
whereas (b) and (c) follow from \cite[Theorem~2.56 and (2.41)]{AFP}.

In order to simplify the notation, in the following we set
$u^-(x) := \ut(x)$ if $x\in\Omega\setminus S_u$.

Let us choose a function \(\varphi\in C^{\infty}_c(\R^N)\), with support contained
in \(B_1(0)\), such that \(0\leq \varphi \leq 1\).
For every \(\varepsilon > 0\) let \(\varphi_{\varepsilon}(y) := \varphi\left(\frac{y-x}{\varepsilon}\right)\).

By the very definition of normal trace, the following equality holds for
every \(\varepsilon > 0\) small enough:
\begin{equation}\label{f:tra}
\begin{split}
\frac{1}{\varepsilon^{N-1}} &
\int_{\partial\Omega_i} 
[\text{Tr}(\v, \partial\Omega_i) -
\text{Tr}(\B(\cdot, u^-(x)), \partial\Omega_i)]
\, \varphi_{\varepsilon}(y)\, d\mathcal{H}^{N-1}(y)
\\ = {} &
\frac{1}{\varepsilon^{N-1}}
\int_{\Omega_i} \nabla\varphi_{\varepsilon}(y) \cdot [\v(y) - \B(y, u^-(x))]\, dy
\\ & + 
\frac{1}{\varepsilon^{N-1}}
\int_{\Omega_i} \varphi_{\varepsilon}(y) \, d[\Div\v - \Div_x\B(\cdot, u^-(x))](y)\,.
\\ =:{} &  
I_1(\varepsilon) + I_2(\varepsilon)\,.
\end{split}
\end{equation}
Using the change of variable \(z = (y-x)/\varepsilon\),
as \(\varepsilon \to 0\) the left hand side of this equality converges to
\[
[\Trm{\v}{\Sigma}(x) - \beta^-(x, u^-(x))] \int_{\Pi_x} \varphi(z)\, d\mathcal{H}^{N-1}(z)\,,
\]
where \(\Pi_x\) is the tangent plane to \(\Sigma_i\) at \(x\).
Clearly \(\varphi\) can be chosen in such a way that
\(\int_{\Pi_x} \varphi\, d\H^{N-1} > 0\).

In order to prove \eqref{f:tracesv} for \(\Tr^-\) it is then enough
to show that the two integrals \(I_1(\varepsilon)\) and \(I_2(\varepsilon)\)
at the right hand side of \eqref{f:tra} converge to \(0\)
as \(\varepsilon \to 0\).

With the change of variables \( z = (y-x) / \varepsilon\) and by the
very definition of \(\v\) we have that
\[
I_1(\varepsilon) =
\int_{\Omega_i^\varepsilon} \nabla\varphi(z) \cdot
[\B(x + \varepsilon z, u(x+\varepsilon z)) - \B(x + \varepsilon z, u^-(x))]\, dz,
\]
where
\[
\Omega_i^\varepsilon := \frac{\Omega_i - x}{\varepsilon}.
\]
As \(\varepsilon\to 0\), these sets locally converge to the half space 
\(P_x := \{z\in\R^N:\ \pscal{z}{\nu(x)} < 0\}\),
hence
\[
\lim_{\varepsilon\to 0}
\int_{\Omega_i^\varepsilon\cap B_1} |u(x+\varepsilon z) - u^-(x)|\, dz  = 
\lim_{\varepsilon\to 0}
\int_{P_x \cap B_1} |u(x+\varepsilon z) - u^-(x)|\, dz  = 0
\]
(see \cite[Remark 3.85]{AFP}) and, by (ii),
\[
|I_1(\varepsilon)| \leq
\|\bsmall\|_\infty\, \|\nabla\varphi\|_{\infty} 
\int_{\Omega_i^\varepsilon\cap B_1} |u(x+\varepsilon z) - u^-(x)|\, dz  
\to 0.
\]

\smallskip
From (b) we have that
\[
\lim_{\varepsilon\to 0} \frac{1}{\varepsilon^{N-1}}
\left|\int_{\Omega_i}
\varphi_{\varepsilon}(y)\, d \Div_x\B(\cdot, u^-(x))(y)
\right|
\leq
\limsup_{\varepsilon\to 0} \frac{\sigma(B_\varepsilon(x))}{\varepsilon^{N-1}}
=0.
\]
In a similar way, using (c), we get
\[
\lim_{\varepsilon\to 0}
\frac{1}{\varepsilon^{N-1}}
\left|\int_{\Omega_i} \varphi_{\varepsilon} \, d\Div\v\right| = 0,
\]
so that \(I_2(\varepsilon)\)
vanishes as \(\varepsilon\to 0\).

\smallskip
The proof of \eqref{f:tracesv} for \(\Tr^+\) is entirely similar.
\end{proof}

\bigskip

\begin{proof}[Proof of Theorem \ref{chainb5}]
We shall divide the proof into several steps.	

\medskip	
STEP 1. 
We are going to prove that
\[
\mu^{ac}=\pscal{\bsmall(x, u(x))}{\nabla u(x)}\, \mathcal{L}^N.
\]
Let us choose $x\in\R^N$ such that
\begin{itemize}
	\item [(a)] 
there exists the limit
\(\displaystyle\lim_{r\to 0}\frac{\mu(B_r(x))}{r^N}\,;\)
\item[(b)]
\(\displaystyle\lim_{r\to 0}\frac{|D^su|(B_r(x))}{r^N}=0;\)
\item[(c)]
\(\displaystyle
\lim_{r\to 0}\frac1{r^N}\int_{B_r(x)}|
\pscal{\bsmall(y,u(y))}{\nabla u(y)}-
\pscal{\bsmall(x,u(x))}{\nabla u(x)}
|\, dx=0.\)
\end{itemize}
We remark that these conditions are satisfied for $\mathcal L^N$-a.e.\ $x\in\R^N$.
 
Let $r>0$ be such that
\begin{equation}
\label{gtgt}
|D^su|\left(\partial B_r(x)\right)=0.
\end{equation}
Observe that
\[
\nabla u_ \varepsilon = \rho_ \varepsilon \ast Du=
\rho_ \varepsilon \ast \nabla u + \rho_ \varepsilon \ast D^s u.
\]
Hence for every $\phi\in C_0(\R^N)$
with support in \(B_r(x)\) it holds
\begin{equation}
\begin{split}
&\left|\frac1{r^N}\int_{B_r(x)}\phi(y)[\pscal{\bsmall(y,u_\varepsilon(y))}{(\rho_ \varepsilon \ast Du)(y)}-
\pscal{\bsmall(x,u(x))}{\nabla u(x)}]\,dy\right|\\
& \leq
\frac1{r^N}\int_{B_r(x)}\phi(y)\left|\pscal{\bsmall(y,u_\varepsilon(y))}{(\rho_ \varepsilon \ast \nabla u)(y)}-
\pscal{\bsmall(x,u(x))}{\nabla u(x)}\right|\,dy\\
& \quad +\frac1{r^N}\|\phi\|_\infty \|\bsmall\|_\infty\int_{B_r(x)}\rho_ \varepsilon \ast |D^su|\,dy,
\end{split}
\end{equation}
where in the last inequality we use that 
$\left|\rho_ \varepsilon \ast D^su\right|\leq \rho_ \varepsilon \ast |D^su|$.
We note that by \eqref{gtgt}
\[
\lim_{\varepsilon\to 0}\int_{B_r(x)}\rho_ \varepsilon \ast |D^su|\,dy= |D^su| (B_r(x)).
\]
Hence taking the limit as $\varepsilon\to 0$ we obtain
\[
\begin{split}
&\left|\frac1{r^N}\int_{B_r(x)}\phi(y)\ d\mu(y)-
\frac1{r^N}\int_{B_r(x)}\phi(y)  \pscal{\bsmall(x,u(x))}{\nabla u(x)}\,dy\right|\\
& \leq
\frac1{r^N}\int_{B_r(x)}\phi(y)\left|\pscal{\bsmall(y,u(y))}{\nabla u(y)}-
\pscal{\bsmall(x,u(x))}{\nabla u(x)}\right|\,dy\\
& \quad
+\frac1{r^N}\|\phi\|_\infty \|\bsmall\|_\infty \, |D^s u|(B_r(x)).
\end{split}
\]
When $\phi(y)\to 1$ in $B_r(x)$, with \(0\leq \phi \leq 1\), we get
\[
\begin{split}
&\left|\frac1{\omega_N r^N}\mu(B_r(x))-
\pscal{\bsmall(x,u(x))}{\nabla u(x)}\right|\\
& \leq
\frac1{\omega_N r^N}\int_{B_r(x)}\left|\pscal{\bsmall(y,u(y))}{\nabla u(y)}-
\pscal{\bsmall(x,u(x))}{\nabla u(x)}\right|\,dy\\
&\quad
+\frac1{\omega_N r^N} \|\bsmall\|_\infty\, |D^s u|(B_r(x)).
\end{split}
\]
Now the conclusion is achieved by taking the limit for $r\to 0$ and using (b) and (c)
above.

\medskip

STEP 2.
For the jump part of the measure $\mu$ it holds:
\begin{equation}\label{f:muj}
\mu^j=
	[\beta^*(x,u^+)
	-\beta^*(x,u^-)]
\hh\res J_u 
\end{equation}
Namely, this is a direct consequence of Proposition~\ref{p:traces}
in the particular case $\Sigma = J_u$.

\medskip
STEP 3.
From now to the end of the proof,
we shall assume that the additional assumption
\eqref{f:assumpDc} holds.

Let
$S := \bigcup_{q\in Q} S_{\bsmall(\cdot, q)}$.
By assumption \eqref{f:assumpDc} we have that $|D^c u|(S) = 0$.

We claim that, for every $x\in\R^N\setminus S$ and every $t\in\R$, there exists the approximate limit
of $\bsmall$ at $(x,t)$ and
\begin{equation}\label{f:limqj}
\widetilde{\bsmall}(x, t) =
\lim_j \widetilde{\bsmall}(x, q_j),
\qquad
\forall (q_j)\subset Q,\
q_j \to t.
\end{equation}
Namely, 
let us fix a point $x\in\R^N\setminus S$.
By assumptions (i), (ii) and the Dominated Convergence Theorem,
the map
\[
\psi(t) := \lim_{r\to 0} \mean{B_r(x)} \bsmall(y,t)\, dy,
\qquad t\in\R,
\]
is continuous, and $\psi(q) = \widetilde{\bsmall}(x,q)$ for every $q\in Q$.
Hence the limit in \eqref{f:limqj} exists for every $t$ and
it is independent of the choice of the sequence $(q_j)\subset Q$
converging to $t$.

Let $t\in\R$ be fixed, let us denote by $c\in\R^N$ the value
of the limit in \eqref{f:limqj}
and let us prove that $c = \widetilde{\bsmall}(x,t)$.
We have that
\[
\mean{B_r(x)} |\bsmall(y,t) - c|\, dy\leq
\mean{B_r(x)} |\bsmall(y,t) - \bsmall(y, q_j)|\, dy
+ \mean{B_r(x)} |\bsmall(y, q_j) - \widetilde{\bsmall}(x, q_j)|\, dy
+ |\widetilde{\bsmall}(x, q_j) - c|.
\]
As $j\to +\infty$,
the first integral at the r.h.s.\ converges to $0$
by (i), (ii) and the Dominated Convergence Theorem.
The second integral converges to $0$ since $x\in \R^N\setminus S$ and $(q_j)\subset Q$.
Finally,
$\lim_j |\widetilde{\bsmall}(x, q_j) - c| = 0$
by the very definition of $c$,
so that the claim is proved.

\medskip
STEP 4. 
We are going to prove that
\[
\mu^{d}=\pscal{\widetilde{\bsmall}(x, \ut(x))}{D^d u}
\]
in the sense of measures.
We remark that, by Step 3, the approximate limit $\widetilde{\bsmall}(x, t)$
exists for every $(x,t) \in (\R^N\setminus S)\times \R$, with $|D^c u|(S) = 0$.
As a consequence, the function
$x\mapsto \widetilde{\bsmall}(x, \ut(x))$ is well-defined
for $|D^d u|$-a.e.\ $x\in\R^N$,
and it belongs to $L^\infty(\R^N, |D^d u|)$.

If we consider the polar decomposition \(D^d u = \theta\, |D^d u|\),
this equality is equivalent to
\[
\frac{d\mu}{d|D^du|}(x) = \frac{d\mu^{d}}{d|D^du|}(x)=
\pscal{\widetilde{\bsmall}(x, \ut(x))}{\theta(x)}
\]
for $|D^d u|$-a.e.\ $x\in \R^N$.
Let us choose $x\in \R^N$ such that
\begin{itemize}
	\item[(a)]
	\(x\) belongs to the support of \(D^d u\), that is,
	\(|D^d u|(B_r(x)) > 0\) for every \(r >0\);
	\item[(b)] 
	there exists the limit
	\(\displaystyle
	\lim_{r\to 0}\frac{\mu^d(B_r(x))}{|D^d u|(B_r(x))};
	\)
	\item[(c)]
	\(\displaystyle
	\lim_{r\to 0}\frac{|D^j u|(B_r(x))}{|D u|(B_r(x))}=0;
	\)
	\item[(d)]
	\(\displaystyle
	\lim_{r\to 0}\frac1{|D^du|(B_r(x))}\int_{B_r(x)}\left|
	\pscal{\widetilde{\bsmall}(y, \ut(y))}{\theta(y)}-
	\pscal{\widetilde{\bsmall}(x, \ut(x))}{\theta(x)}
	\right|\,d|D^d u|(y)=0.
	\)
\end{itemize}
We remark that these conditions are satisfied
for $|D^d u|$-a.e.\ $x\in\R^N$. 
In particular, (d) follows from the fact that
the map
$y\mapsto \widetilde{\bsmall}(y, \ut(y))$
belongs to $L^\infty(\R^N, |D^d u|)$.

Let $r>0$ be such that
\begin{equation}
\label{gtgt8}
|D^ju|\left(\partial B_r(x)\right)=0.
\end{equation}
Observe that
\(\nabla u_ \varepsilon = \rho_ \varepsilon \ast Du=
\rho_ \varepsilon \ast D^d u+\rho_ \varepsilon \ast D^ju\).
Hence, for every $\phi\in C_c(\R^N)$ with support in \(B_r(x)\), it holds
\begin{equation}\label{f:diseq}
\begin{split}
&\Bigg|\frac1{|D^d u|(B_r(x))}\int_{B_r(x)}\phi(y)
\pscal{\bsmall(y,u_\varepsilon(y))}{(\rho_ \varepsilon \ast Du)(y)}\,dy
\\ & \quad -
\frac1{|D^d u|(B_r(x))}\int_{B_r(x)}\phi(y)  
\pscal{\widetilde{\bsmall}(x, \ut(x))}{\theta(x)}\,d|D^d u|(y)\Bigg|\\
& \leq
\Bigg|\frac1{|D^d u|(B_r(x))}\int_{B_r(x)}\phi(y) 
\pscal{\bsmall (y,u_\varepsilon(y))}{(\rho_ \varepsilon \ast D^d u)(y)}\, dy
\\ & \quad -
\frac1{|D^d u|(B_r(x))}\int_{B_r(x)}\phi(y) 
\pscal{\widetilde{\bsmall} (x,\ut(x))}{\theta(x)}\,d|D^d u|(y)\Bigg|
\\ & \quad 
+ \frac1{|D^d u|(B_r(x))}\|\phi\|_\infty \|\bsmall\|_\infty\int_{B_r(x)}\rho_ \varepsilon \ast |D^ju|\,dy,
\end{split}
\end{equation}
where in the last inequality we use that 
$\left|\rho_ \varepsilon \ast D^ju\right|\leq \rho_ \varepsilon \ast |D^ju|$.
We note that by \eqref{gtgt8}
\[
\lim_{\varepsilon\to 0}\int_{B_r(x)}\rho_ \varepsilon \ast |D^ju|\,dy= 
|D^j u| (B_r(x)).
\]
Hence by taking the limit as $\varepsilon\to 0$ in \eqref{f:diseq} we obtain
\[
\begin{split}
&\Bigg|\frac1{|D^d u|(B_r(x))}\int_{B_r(x)}\phi(y)\ d\mu(y)\\&-
\frac1{|D^d u|(B_r(x))}\int_{B_r(x)}\phi(y)
\pscal{\widetilde{\bsmall}(x,\ut(x))}{\theta(x)}\,d|D^d u|(y)\Bigg|\\
&\leq 
\frac1{|D^d u|(B_r(x))}\int_{B_r(x)}\phi(y)
\left|\pscal{\widetilde{\bsmall}(y,\ut(y))}{\theta(y)}-
\pscal{\widetilde{\bsmall}(x,\ut(x))}{\theta(x)}\,d|D^d u|(y)\right|\\
&\quad +
\frac1{|D^d u|(B_r(x))}\|\phi\|_\infty \|\bsmall\|_\infty\,|D^j u|(B_r(x)).
\end{split}
\]
When $\phi(y)\to 1$ in $B_r(x)$, with \(0\leq \phi \leq 1\), we get
\begin{equation}
\begin{split}
&\left|\frac1{|D^d u|(B_r(x))}\mu(B_r(x))- 
\pscal{\widetilde{\bsmall}(x,\ut(x))}{\theta(x)}\right|\\
& \leq
\frac1{|D^d u|(B_r(x))}\int_{B_r(x)}\left|
\pscal{\widetilde{\bsmall}(y,\ut(y))}{\theta(y)}-
\pscal{\widetilde{\bsmall}(x,\ut(x))}{\theta(x)}\right|\, d|D^d u|(y)\\
& \quad +
\frac1{|D^d u|(B_r(x))} \|\bsmall\|_\infty\,
|D^ju|(B_r(x)).
\end{split}
\end{equation}
The conclusion is achieved now by taking $r\to 0$ and by using (c) and (d).
\end{proof}

\section{Gluing constructions and extension theorems}
\label{s:gluing}

A direct consequence of 
Theorems~\ref{chainb4}, \ref{chainb5} 
and \cite[Theorems 5.1 and 5.3]{ComiPayne}
are the following
gluing constructions and extension theorems
(the proofs are entirely similar to that of \cite[Theorems 8.5 and 8.6]{ChToZi} and \cite[Theorems 5.1 and 5.3]{ComiPayne}).

\begin{theorem}[Extension]
	Let $W \Subset \textrm{int}(E) \subset E \Subset U \subset\Omega$,
	where $\Omega, U, W\subset\R^N$ are open sets and $E$ is a set of finite perimeter in $\Omega$.
	Let 
	\[
	\bsmall_1\colon U\times\R\to\R^N,
	\qquad
	\bsmall_2\colon (\Omega\setminus\overline{W})\times\R\to\R^N
	\]
	satisfy assumptions (i)--(iv) in $U\times\R$ and $\Omega\setminus\overline{W}$ respectively.
	Let $\B_1, \B_2$ be the corresponding integral functions with respect to the second variable.
	Given $u_1\in\BVLloc[U]$ and $u_2\in\BVLloc[\Omega\setminus\overline{W}]$,
	let $\v_i(x) := \B_i(x, u_i(x))$, $i = 1,2$. 
	Then the function
	\[
	\v(x) :=
	\begin{cases}
	\v_1(x),
	& \text{if}\ x\in E,\\
	\v_2(x),
	& \text{if}\ x\in\Omega\setminus E,
	\end{cases}
	\]
	belongs to $\DMloc$ and
	\[
	\Div\v = \chi_{E^1} \Div\v_1 + \chi_{E^0} \Div\v_2 +
	[\Trp{\v_1}{\partial^* E} - \Trm{\v_2}{\partial^* E}]\,
	\H^{N-1}\res\partial^*E.  
	\]
\end{theorem}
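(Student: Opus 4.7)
The plan is to reduce the statement to the standard gluing formula for general $\DM$ fields across a set of finite perimeter, using the results of Section~\ref{s:div} only to guarantee that each of $\v_1,\v_2$ is itself a divergence-measure field.

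First I would invoke Lemma~\ref{l:divv} twice: since $\bsmall_1$ satisfies (i)--(iv) on $U\times\R$ and $u_1\in\BVLloc[U]$, the composite $\v_1$ belongs to $\DMloc[U]$, and analogously $\v_2\in\DMloc[\Omega\setminus\overline{W}]$. The strict containments $W\Subset\mathrm{int}(E)\subset E\Subset U$ are arranged precisely so that $\v_1$ is a $\DMloc$ field on an open neighborhood of $\overline{E}$, while $\v_2$ is a $\DMloc$ field on $\Omega\setminus\overline{W}$, an open set that contains $\Omega\setminus\mathrm{int}(E)$ and hence every point of $\partial^*E$. Consequently both the interior weak normal trace $\Trp{\v_1}{\partial^*E}$ and the exterior one $\Trm{\v_2}{\partial^*E}$ are well-defined $L^\infty(\partial^*E,\H^{N-1})$ functions, in the sense recalled in Section~\ref{distrtraces}.

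Next, for any $\phi\in C^\infty_c(\Omega)$ I would split
\[
-\int_\Omega \v\cdot\nabla\phi\,dx = -\int_E \v_1\cdot\nabla\phi\,dx -\int_{\Omega\setminus E}\v_2\cdot\nabla\phi\,dx,
\]
and apply the Anzellotti-type Gauss--Green formula to each integral separately, exactly as in the proofs of \cite[Theorems~5.1 and 5.3]{ComiPayne} and \cite[Theorems~8.5 and 8.6]{ChToZi}. The contribution of $E$ yields $\int_{E^1}\phi\,d\Div\v_1 + \int_{\partial^*E}\phi\,\Trp{\v_1}{\partial^*E}\,d\H^{N-1}$, while the contribution of $\Omega\setminus E$ yields $\int_{E^0}\phi\,d\Div\v_2 - \int_{\partial^*E}\phi\,\Trm{\v_2}{\partial^*E}\,d\H^{N-1}$, the sign change encoding the reversal of orientation on the reduced boundary. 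Summing the two identifies $\Div\v$ with the Radon measure
\[
\chi_{E^1}\Div\v_1 + \chi_{E^0}\Div\v_2 + [\Trp{\v_1}{\partial^*E} - \Trm{\v_2}{\partial^*E}]\,\H^{N-1}\res\partial^*E,
\]
which simultaneously proves $\v\in\DMloc$ and yields the claimed decomposition.

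The main obstacle is essentially the verification of the localisation, that is, that the strict containments in the hypotheses deliver enough room on both sides of $\partial^*E$ for the one-sided weak normal traces of $\v_1$ and $\v_2$ to be constructed via the machinery of Section~\ref{distrtraces}. Once this is in place, nothing specific to the nonlinear composite structure $\B_i(x,u_i(x))$ enters the argument: the gluing formula is a purely $\DM$-theoretic statement about $\v_1$ and $\v_2$, and the cited arguments of \cite{ComiPayne,ChToZi} transfer essentially verbatim.
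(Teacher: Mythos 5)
Your proposal is correct and follows exactly the route the paper takes: the paper proves this theorem only by remarking that, once Lemma~\ref{l:divv} (or Theorem~\ref{chainb4}) guarantees $\v_1,\v_2$ are divergence--measure fields on neighbourhoods of $\overline{E}$ and of $\Omega\setminus\mathrm{int}(E)$ respectively, the gluing identity is the purely $\DM$--theoretic statement of \cite[Theorems 5.1 and 5.3]{ComiPayne} and \cite[Theorems 8.5 and 8.6]{ChToZi}. Your elaboration of the localisation of the one-sided traces and the splitting of the test integral over $E$ and $\Omega\setminus E$ is a faithful expansion of that same argument.
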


\begin{theorem}[Gluing]\label{t:gluing}
Let $U\Subset\Omega\subset\R^N$ be open sets with $\H^{N-1}(\partial U) < \infty$,
and let 
\[
\bsmall_1\colon U\times\R\to\R^N,
\qquad
\bsmall_2\colon (\Omega\setminus\overline{U})\times\R\to\R^N
\]
satisfy assumptions (i)--(iv) in $U\times\R$ and $\Omega\setminus\overline{U}$ respectively.
Let $\B_1, \B_2$ be the corresponding integral functions with respect to the second variable.
Given $u_1\in BV(U)\cap L^\infty(U)$ and 
$u_2\in BV(\Omega\setminus\overline{U})\cap L^\infty(\Omega\setminus\overline{U})$,
let 
\[
\v_1(x) := \begin{cases}
\B_1(x, u_1(x)),
&\text{if}\ x\in U,\\
0,
&\text{if}\ x\in\Omega\setminus{U},
\end{cases}
\qquad
\v_2(x) := \begin{cases}
0,
&\text{if}\ x\in \overline{U},\\
\B_2(x, u_2(x)),
&\text{if}\ x\in\Omega\setminus\overline{U}.
\end{cases}
\]
Then $\v_1, \v_2, \v \in \DM(\Omega)$ and
\[
\Div\v = \chi_{U^1} \Div\v_1 + \chi_{U^0} \Div\v_2 +
[\Trp{\v_1}{\partial^* U} - \Trm{\v_2}{\partial^* U}]\,
\H^{N-1}\res\partial^*U.  
\]
\end{theorem}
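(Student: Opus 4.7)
The proof combines Theorem~\ref{chainb4} with a standard extension/gluing argument in the style of \cite{ComiPayne,ChToZi}.

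First I would apply Theorem~\ref{chainb4} on $U$ and on $\Omega\setminus\overline{U}$ separately to conclude that the restriction of $\v_1$ to $U$, namely $x\mapsto \B_1(x, u_1(x))$, belongs to $\DMloc[U]$, and analogously the restriction of $\v_2$ to $\Omega\setminus\overline{U}$ belongs to $\DMloc[\Omega\setminus\overline{U}]$. Both fields are in $L^\infty$ since each $\bsmall_i$ is locally bounded and $u_i\in L^\infty$, and the estimate \eqref{f:boundv} controls the total variation of their divergences on compact subsets.

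Next I would use the hypothesis $\H^{N-1}(\partial U) < +\infty$, which in particular yields $\H^{N-1}(\partial^* U) < +\infty$, so that $U$ is a set of finite perimeter in $\Omega$. Testing $\v_1$ against $\varphi \in C_c^\infty(\Omega)$ and applying the Anzellotti-type Gauss--Green formula on $U$ to the $\DMloc[U]$ field $\B_1(\cdot, u_1(\cdot))$ (as in \cite[Theorem~5.1]{ComiPayne}) yields, as Radon measures on $\Omega$,
\[
\Div\v_1 = \chi_{U^1}\, \Div[\B_1(x, u_1(x))] - \Trp{\v_1}{\partial^* U}\, \H^{N-1}\res\partial^* U,
\]
and a symmetric computation for $\v_2$ gives
\[
\Div\v_2 = \chi_{U^0}\, \Div[\B_2(x, u_2(x))] + \Trm{\v_2}{\partial^* U}\, \H^{N-1}\res\partial^* U.
\]
In particular both $\v_1$ and $\v_2$ belong to $\DM(\Omega)$. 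This is the heart of the argument; the main technical point is identifying, on $\partial^* U$, the singular part of each divergence with the correct one-sided normal trace and the correct sign dictated by the orientation conventions of Section~\ref{distrtraces}, which we import from Comi--Payne's extension theorem together with the localization results of \cite{AmbCriMan}.

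Finally, since $\LLN(\partial U) = 0$ and $\v_1, \v_2$ have essentially disjoint supports, $\v = \v_1 + \v_2$ holds $\LLN$-almost everywhere in $\Omega$. By linearity of the distributional divergence then $\Div\v = \Div\v_1 + \Div\v_2$, and summing the two identities above produces the stated formula.
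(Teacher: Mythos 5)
Your overall strategy is the intended one: the paper gives no proof of this theorem, stating only that it is a direct consequence of Theorem~\ref{chainb4} together with the extension results of Comi--Payne and Chen--Torres--Ziemer, and your outline (apply Theorem~\ref{chainb4} on each side, extend by zero using $\H^{N-1}(\partial U)<\infty$, identify the boundary contribution with the one-sided normal traces, and add) is exactly that argument.

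There is, however, a concrete sign problem in the two displayed identities, which makes your final step internally inconsistent. With the conventions of Section~\ref{distrtraces} as used in the Gauss--Green section ($\partial^* U$ oriented by the \emph{interior} normal, so that $\Tr^+$ is the trace from inside $U$ and $\Tr^-$ from outside), relation \eqref{mmm} applied to $\v_1$ gives $\Div\v_1\res\partial^* U=[\Trp{\v_1}{\partial^* U}-\Trm{\v_1}{\partial^* U}]\,\H^{N-1}\res\partial^* U$, and since $\v_1$ vanishes identically on the exterior side one has $\Trm{\v_1}{\partial^* U}=0$; hence the correct identities are
\[
\Div\v_1 = \chi_{U^1}\, \Div[\B_1(x, u_1(x))] + \Trp{\v_1}{\partial^* U}\, \H^{N-1}\res\partial^* U,
\qquad
\Div\v_2 = \chi_{U^0}\, \Div[\B_2(x, u_2(x))] - \Trm{\v_2}{\partial^* U}\, \H^{N-1}\res\partial^* U,
\]
i.e.\ with the signs opposite to yours. (A one-dimensional check with $\B_1(x,t)\equiv$ const confirms this: the zero extension of the constant field $1$ on $(-1,1)$ has divergence $\delta_{-1}-\delta_{1}$, which equals $+\Tr^+\,\H^{0}\res\{-1,1\}$ under the interior-normal convention.) As written, the sum of your two identities produces $-[\Trp{\v_1}{\partial^* U}-\Trm{\v_2}{\partial^* U}]$ in the boundary term, i.e.\ the negative of the statement, so "summing the two identities above produces the stated formula" does not actually hold for the formulas you wrote. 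A second, more minor point: before invoking the extension theorem you must upgrade from $\v_1|_U\in\DMloc[U]$ with the compact-subset bound \eqref{f:boundv} to $\v_1|_U\in\DM(U)$, i.e.\ $|\Div\v_1|(U)<+\infty$ together with a global $L^\infty(U)$ bound; this is where $u_1\in BV(U)\cap L^\infty(U)$ (globally, not just locally) and the finiteness of $\sigma_1$ and of $\|\bsmall_1\|_\infty$ up to $\partial U$ enter, and it deserves a sentence rather than a reference to bounds "on compact subsets".
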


\section{The Gauss--Green formula}
\label{s:green}

Let $E\Subset\Omega$ be a set of finite perimeter.
Using the conventions of Section~\ref{distrtraces},
we will assume that the generalized normal vector on \(\partial^* E\) coincides
\(\hh\)-a.e.\ on \(\partial^* E\) with the measure--theoretic 
\textsl{interior} unit normal vector \(\nuint_E\) to \(E\).

We recall that, if $u\in BV_{\rm loc}(\Omega)$, then we will understand
$u^\pm(x) = \ut(x)$ for every $x\in \Omega\setminus S_u$.

The following result has been proved in \cite{CDC3} in the case
$\B(x,w) = w\, \A(x)$
(see also \cite{ComiPayne,LeoSar} for related results).
To simplify the notation, we will denote by
$\mu := (\bsmall(\cdot,u), Du)$ 
the Radon measure introduced in~\eqref{f:bxu}.

\begin{theorem}[Gauss--Green formula]\label{t:GG}
	Let $\bsmall$
	satisfy assumptions (i)--(iv)
	and let \(\B\) be defined by \eqref{f:B}.
	Let \(E \Subset \Omega\) be a bounded set with finite perimeter
	and let  $u\in \BVLloc$.
	Then the following Gauss--Green formulas hold:
	\begin{gather}
	\int_{E^1} \frac{ F(x, u^+(x)) + F(x, u^-(x)) }{2}\, d\sigma(x)
	+ \mu(E^1) =
	-\int_{\partial ^*E} \beta^+(x,u^+(x)) \ d\mathcal H^{N-1}\,,\label{GreenIB}\\
	\begin{split}
	\int_{E^1\cup\partial^*E} \frac{ F(x, u^+(x)) + F(x, u^-(x)) }{2}\, d\sigma(x)
	& + \mu(E^1\cup\partial^*E) \\
	& = -\int_{\partial ^*E} \beta^-(x,u^-(x)) \ d\mathcal H^{N-1}\,,
	\end{split}\label{GreenIB2}
	\end{gather}
	where $E^1$ is the measure theoretic interior of $E$,
	and $\beta^\pm(\cdot,t) := \Trace{\B(\cdot,t)}{\partial^* E}$
	are the normal traces of \(\B(\cdot, t)\) when \(\partial^* E\) is oriented
	with respect to the interior unit normal vector.
\end{theorem}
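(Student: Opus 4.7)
My plan is to reduce \eqref{GreenIB} to the observation that a divergence--measure field compactly supported inside $\Omega$ has vanishing total divergence. To produce such a field I zero--extend $\B(\cdot, u(\cdot))$ outside $E$, invoking the Extension Theorem of Section~\ref{s:gluing} with the trivial choice of zero data on the outer side. Specifically, I fix open sets $W \Subset \mathrm{int}(E) \subset E \Subset U \Subset \Omega$, set $\bsmall_1 := \bsmall$ on $U \times \R$ and $\bsmall_2 := 0$ on $(\Omega\setminus\overline{W}) \times \R$, together with $u_1 := u|_U$ and $u_2 := 0$, and let $\v^* \in \DMloc$ be the resulting field. Then $\v^* = \B(\cdot, u(\cdot))$ on $E$, $\v^* = 0$ on $\Omega\setminus E$, and
\[
\Div \v^* = \chi_{E^1}\, \Div[\B(\cdot, u(\cdot))] + \Tr^+(\B(\cdot, u(\cdot)), \partial^* E)\, \H^{N-1}\res\partial^* E,
\]
the $\Tr^-$--term dropping because $\v_2 \equiv 0$.

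Since $\mathrm{supp}(\Div\v^*) \subset E^1 \cup \partial^* E \subset \overline{E} \Subset \Omega$, testing $\Div\v^*$ against any cutoff $\phi\in C_c^\infty(\Omega)$ identically equal to $1$ on a neighborhood of $\overline{E}$ yields $\Div\v^*(\Omega) = 0$. At this point I apply Theorem~\ref{chainb4} to expand the first term as $\chi_{E^1}\bigl[\tfrac{1}{2}\bigl(F(x, u^+) + F(x, u^-)\bigr)\sigma + \mu\bigr]$, and Proposition~\ref{p:traces}, with $\Sigma := \partial^* E$ oriented by the interior normal $\nuint_E$, to identify $\Tr^+(\B(\cdot, u(\cdot)), \partial^* E)(x) = \beta^+(x, u^+(x))$ for $\H^{N-1}$-a.e.\ $x\in \partial^* E$. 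Inserting these two identifications into $\Div\v^*(\Omega) = 0$ and rearranging gives exactly \eqref{GreenIB}.

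For \eqref{GreenIB2} I would instead work with the globally defined $\v := \B(\cdot, u(\cdot))\in\DMloc$ and use the jump identity
\[
\Div\v\res\partial^* E = [\Tr^+(\v,\partial^* E) - \Tr^-(\v,\partial^* E)]\,\H^{N-1}\res\partial^* E = [\beta^+(\cdot, u^+) - \beta^-(\cdot, u^-)]\,\H^{N-1}\res\partial^* E,
\]
which is the direct analogue of \eqref{mmm} applied to the $\DM$--field $\v$, combined once more with Proposition~\ref{p:traces}. Rewriting \eqref{GreenIB} as $\Div\v(E^1) = -\int_{\partial^* E}\beta^+(x, u^+(x))\,d\H^{N-1}$ (legitimate by Theorem~\ref{chainb4}) and adding the integrated jump identity converts $\Div\v(E^1)$ on the left--hand side into $\Div\v(E^1 \cup \partial^* E)$, while on the right--hand side the $\beta^+$ contributions cancel and only $-\int_{\partial^* E}\beta^-(x, u^-(x))\,d\H^{N-1}$ survives, which is \eqref{GreenIB2}. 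The only delicate point I foresee is bookkeeping of orientations: making sure that the ``$+$''/``$-$'' conventions of the Extension Theorem, Section~\ref{distrtraces}, and Proposition~\ref{p:traces} are consistently aligned with $\partial^* E$ carrying the interior normal $\nuint_E$ of $E$; no genuinely new analytical step is required, since all the substantial work has already been done in Theorem~\ref{chainb4}, Proposition~\ref{p:traces}, and the extension machinery of Section~\ref{s:gluing}.
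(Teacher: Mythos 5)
Your argument is correct and leads to both \eqref{GreenIB} and \eqref{GreenIB2}, but it packages the boundary term differently from the paper. The paper also starts from the vanishing of the total divergence of the zero--extension of $\v:=\B(\cdot,u(\cdot))$ outside $E$, in the guise $\Div(\chi_E\v)(\R^N)=0$, but it then decomposes this measure through the Anzellotti identity \eqref{f:anz}, $\Div(\chi_E\v)=\chi_E^*\,\Div\v+(\v,D\chi_E)$, writes $\chi_E^*=\chi_{E^1}+\tfrac12\chi_{\partial^*E}$, and identifies both the term $\tfrac12\Div\v\res\partial^*E=\tfrac12[\Tr^+-\Tr^-](\v,\partial^*E)\,\H^{N-1}\res\partial^*E$ and the pairing $(\v,D\chi_E)=\tfrac12[\Tr^++\Tr^-](\v,\partial^*E)\,\H^{N-1}\res\partial^*E$ via Proposition~\ref{p:traces}; the sum of these two contributions is $\Tr^+(\v,\partial^*E)\,\H^{N-1}\res\partial^*E$, which is exactly the single boundary term your Extension--theorem route produces in one step. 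So the two decompositions of $\Div(\chi_E\v)$ coincide; yours buys a cleaner isolation of the interior trace (no averaging of one--sided traces), while the paper's stays entirely within the pairing formalism it has already set up and does not need the gluing machinery of Section~\ref{s:gluing}. Your derivation of \eqref{GreenIB2} by adding the jump identity $\Div\v\res\partial^*E=[\beta^+(\cdot,u^+)-\beta^-(\cdot,u^-)]\,\H^{N-1}\res\partial^*E$ to \eqref{GreenIB} is a legitimate and explicit version of the step the paper leaves as ``entirely similar''. Two minor points to tidy up: the hypothesis $W\Subset\mathrm{int}(E)$ of the Extension theorem may be unattainable when $E$ has empty topological interior, but since you take $\bsmall_2\equiv 0$ and $u_2\equiv 0$ you can simply take $W=\emptyset$ (or replace the Extension theorem by the direct computation above); and you should record that the normal traces of $\v_1=\v|_U$ on $\partial^*E$ agree with those of the global field $\v$, which follows from the localization property of weak normal traces recalled in Section~\ref{distrtraces}.
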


\begin{proof}
Since $E$ is bounded we can assume, without loss of generality,
that $u\in BV(\R^N)\cap L^\infty(\R^N)$.
By Theorem~\ref{chainb4}, the composite function $\v(x) := \B(x, u(x))$
belongs to $\DM$.
Since $E$ is a bounded set of finite perimeter, the characteristic function
$\chi_E$ is a compactly supported $BV$ function, so that
$\Div (\chi_E \v) (\R^N) = 0$
(see \cite[Lemma~3.1]{ComiPayne}).

We recall that, for every $w \in BV\cap L^\infty$ and every $\A\in \DM$, 
it holds
\[
\Div(w \A) = w^* \Div \A + (\A, Dw),
\]
where $(\A, Dw)$ is the Anzellotti pairing between the function $w$ and the
vector field $\A$
(see \eqref{f:anz}).
Hence, using the above formula with $w = \chi_E$ and $\A = \v$,
it follows that
\begin{equation}\label{f:eqgg1}
0 = \Div (\chi_E \v) (\R^N)
= \int_{\R^N} \chi_E^* \, d\Div \v + (\v, D\chi_E)(\R^N).
\end{equation}
Since
\[
(\v, D\chi_E) =
[\Trp{\v}{\partial^*E} - \Trm{\v}{\partial^* E}] 
\H^{N-1}\res \partial^*E,
\]
from 
Proposition~\ref{p:traces} we get
\begin{equation}\label{f:eqgg2}
(\v, D\chi_E)(\R^N) = \int_{\partial^* E} 
[\beta^+(x, u^+(x)) - \beta^-(x, u^-(x))]
\, d\H^{N-1}(x).
\end{equation}
Since $\chi^*_{E} = \chi_{E^1} + \frac{1}{2} \chi_{\partial^* E}$, using again 
Proposition~\ref{p:traces} and
\eqref{lolo} it holds
\begin{equation}\label{f:eqgg3}
\begin{split}
\int_{\R^N} \chi_E^* \, d\Div \v = {} &
\Div \v (E^1) + \frac{1}{2} \int_{\partial^* E} 
\frac{[\beta^+(x, u^+(x)) + \beta^-(x, u^-(x)}{2}\,
d\H^{N-1}(x)
\\ = {} &
\int_{E^1} \frac{ F(x, u^+(x)) + F(x, u^-(x)) }{2}\, d\sigma(x)
+ \mu(E^1)
\\ & 
+ \frac{1}{2} \int_{\partial^* E} 
\frac{[\beta^+(x, u^+(x)) + \beta^-(x, u^-(x)}{2}\,
d\H^{N-1}(x)\,.
\end{split}
\end{equation}
Formula \eqref{GreenIB} now follows from
\eqref{f:eqgg1}, \eqref{f:eqgg2} and \eqref{f:eqgg3}.

The proof of \eqref{GreenIB2} is entirely similar.
\end{proof}

It is worth to mention a consequence of the gluing construction 
given in Theorem~\ref{t:gluing} and the Gauss--Green formula \eqref{GreenIB}.
To this end, following \cite{LeoSar2}, any bounded open
set $\Omega\subset\R^N$ with finite perimeter, such that
$\H^{N-1}(\partial\Omega) = \H^{N-1}(\partial^*\Omega)$,
will be called \textsl{weakly regular}.
For weakly regular sets we have the following version of
the Gauss--Green formula
(see \cite[Corollary 5.5]{ComiPayne} for a similar statement
for autonomous vector fields).

\begin{theorem}[Gauss--Green formula for weakly regular sets]\label{t:GGw}
Let $\Omega\subset\R^N$ be a weakly regular set.
Let $\bsmall$ satisfy assumptions (i)--(iv),
let \(\B\) be defined by \eqref{f:B}
and let  $u\in BV(\Omega)\cap L^\infty(\Omega)$.
Then the following Gauss--Green formula holds:
\begin{equation}
\label{f:GGw}
\int_{\Omega} \frac{ F(x, u^+(x)) + F(x, u^-(x)) }{2}\, d\sigma(x)
+ \mu(\Omega) =
-\int_{\partial\Omega} \beta^+(x,u^+(x)) \ d\mathcal H^{N-1}\,.
\end{equation}
\end{theorem}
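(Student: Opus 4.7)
The plan is to extend $\bsmall$ and $u$ by zero outside $\overline{\Omega}$ via the gluing construction of Theorem~\ref{t:gluing}, producing a $\DM$ field supported in $\overline{\Omega}$, and then to exploit the fact that any compactly supported $\DM$ field on $\R^N$ has vanishing total divergence.

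First, we apply Theorem~\ref{t:gluing} with ambient open set $\R^N$, $U = \Omega$, $\bsmall_1 := \bsmall$ and $u_1 := u$, while taking $\bsmall_2 \equiv 0$ on $(\R^N\setminus\overline{\Omega})\times\R$ and $u_2 \equiv 0$. This produces a vector field $\v\in\DM(\R^N)$ with $\v(x) = \B(x,u(x))$ on $\Omega$ and $\v\equiv 0$ on $\R^N\setminus\overline{\Omega}$, together with the decomposition
\[
\Div\v = \chi_{\Omega^1}\,\Div\v_1 + \Trp{\v_1}{\partial^*\Omega}\,\H^{N-1}\res\partial^*\Omega,
\]
since the $\v_2$-contribution vanishes identically. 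Because $\v$ has compact support in $\overline{\Omega}$, a standard cutoff argument (cf.\ \cite[Lemma~3.1]{ComiPayne}) gives $\Div\v(\R^N) = 0$, so it remains only to identify the two terms on the right.

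For the bulk term, Theorem~\ref{chainb4} applied on $\Omega$ yields $\Div\v_1\res\Omega = \tfrac{1}{2}[F(x,u^+) + F(x,u^-)]\sigma + \mu$. Since $\Omega$ is open we have $\Omega\subseteq \Omega^1$, while Federer's structure theorem gives $\partial^*\Omega\subseteq \Omega^{1/2}$, hence $\Omega^1\setminus\Omega\subseteq \partial\Omega\setminus\partial^*\Omega$. By weak regularity this exceptional set has $\H^{N-1}$-measure zero, and since $\Div\v_1\ll\H^{N-1}$ we obtain $(\Div\v_1)(\Omega^1) = \int_\Omega \tfrac12[F(x,u^+) + F(x,u^-)]\,d\sigma + \mu(\Omega)$. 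For the boundary term, Proposition~\ref{p:traces} applied with $\Sigma = \partial^*\Omega$ (oriented by the interior unit normal, consistent with Section~\ref{distrtraces}) identifies $\Trp{\v_1}{\partial^*\Omega}(x) = \beta^+(x,u^+(x))$ for $\H^{N-1}$-a.e.\ $x\in\partial^*\Omega$, and the weakly regular hypothesis then allows the integral over $\partial^*\Omega$ to be rewritten as one over $\partial\Omega$. Combining these identities with $\Div\v(\R^N) = 0$ yields~\eqref{f:GGw}.

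The main obstacle is the bulk step: one has to verify that no singular contribution of $\Div\v_1$ is hidden on $\Omega^1\cap\partial\Omega$. This is precisely where the weakly regular hypothesis $\H^{N-1}(\partial\Omega) = \H^{N-1}(\partial^*\Omega)$ is indispensable, via the combination $\Div\v_1\ll\H^{N-1}$ and $\Omega^1\cap\partial\Omega\subseteq\partial\Omega\setminus\partial^*\Omega$; without it, the identification of $\chi_{\Omega^1}\Div\v_1$ with the expected bulk measure supported in $\Omega$ could fail.
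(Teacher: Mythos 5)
Your proposal is correct and follows essentially the same route as the paper: extend by zero via the gluing theorem, use the weak regularity relations to replace $\Omega^1$ by $\Omega$ and $\partial^*\Omega$ by $\partial\Omega$ (exploiting $\Div\v_1\ll\H^{N-1}$), and conclude from $\Div\v(\R^N)=0$ together with the trace identification of Proposition~\ref{p:traces}. Your write-up actually spells out more explicitly the step the paper compresses into ``reasoning as in the proof of \eqref{GreenIB}''.
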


\begin{proof}
Since $\Omega$ is a set of finite perimeter,
it holds $\partial^*\Omega \subseteq \partial\Omega$,
hence the assumption $\H^{N-1}(\partial\Omega) = \H^{N-1}(\partial^*\Omega)$
of weak regularity implies that
$\H^{N-1}(\partial\Omega\setminus\partial^*\Omega) = 0$.
Consequently,
\begin{equation}\label{f:incl}
\H^{N-1}\res \partial\Omega = \H^{N-1}\res \partial^*\Omega,
\quad
\H^{N-1}(\Omega^1\setminus\Omega) = 0,
\quad
\H^{N-1}(\Omega^0\setminus(\R^N\setminus\overline{\Omega})) = 0.
\end{equation}

Let us consider the vector field
\[
\v(x) :=
\begin{cases}
\v_1(x) := \B(x, u(x)), 
&\text{if}\ x\in\Omega,\\
0,
&\text{if}\ x\in \R^N\setminus\Omega\,.
\end{cases}
\]
By Theorem \ref{t:gluing} and \eqref{f:incl} we have that $\v\in\DM$ and
\[
\Div\v = \chi_{\Omega} \Div\v_1 + \Trp{\v_1}{\partial\Omega}\,\H^{N-1}\res\partial\Omega,
\]
hence \eqref{f:GGw} follows reasoning as in the proof of \eqref{GreenIB}.
\end{proof}

\def\cprime{$'$}
\begin{bibdiv}
\begin{biblist}

\bib{ACDD}{article}{
      author={Ambrosio, {L.}},
      author={Crasta, {G.}},
      author={De~Cicco, {V.}},
      author={De~Philippis, {G.}},
       title={A nonautonomous chain rule in {$W^{1,p}$} and {$BV$}},
        date={2013},
        ISSN={0025-2611},
     journal={Manuscripta Math.},
      volume={140},
      number={3-4},
       pages={461\ndash 480},
         url={http://dx.doi.org/10.1007/s00229-012-0548-z},
      review={\MR{3019135}},
}

\bib{AmbCriMan}{article}{
      author={Ambrosio, {L.}},
      author={Crippa, {G.}},
      author={Maniglia, {S.}},
       title={Traces and fine properties of a {$BD$} class of vector fields and
  applications},
        date={2005},
        ISSN={0240-2963},
     journal={Ann. Fac. Sci. Toulouse Math. (6)},
      volume={14},
      number={4},
       pages={527\ndash 561},
         url={http://afst.cedram.org/item?id=AFST_2005_6_14_4_527_0},
      review={\MR{2188582}},
}

\bib{ADM}{incollection}{
      author={Ambrosio, {L.}},
      author={De~Lellis, {C.}},
      author={Mal\'y, {J.}},
       title={On the chain rule for the divergence of {BV}-like vector fields:
  applications, partial results, open problems},
        date={2007},
   booktitle={Perspectives in nonlinear partial differential equations},
      series={Contemp. Math.},
      volume={446},
   publisher={Amer. Math. Soc., Providence, RI},
       pages={31\ndash 67},
         url={http://dx.doi.org/10.1090/conm/446/08625},
      review={\MR{2373724}},
}

\bib{AFP}{book}{
      author={Ambrosio, {L.}},
      author={Fusco, {N.}},
      author={Pallara, {D.}},
       title={Functions of bounded variation and free discontinuity problems},
      series={Oxford Mathematical Monographs},
   publisher={The Clarendon Press Oxford University Press},
     address={New York},
        date={2000},
        ISBN={0-19-850245-1},
      review={\MR{MR1857292 (2003a:49002)}},
}

\bib{AVCM}{book}{
      author={Andreu-Vaillo, {F.}},
      author={Caselles, {V.}},
      author={Maz\'on, {J.M.}},
       title={Parabolic quasilinear equations minimizing linear growth
  functionals},
      series={Progress in Mathematics},
   publisher={Birkh\"auser Verlag, Basel},
        date={2004},
      volume={223},
        ISBN={3-7643-6619-2},
         url={http://dx.doi.org/10.1007/978-3-0348-7928-6},
      review={\MR{2033382}},
}

\bib{Anz}{article}{
      author={Anzellotti, {G.}},
       title={Pairings between measures and bounded functions and compensated
  compactness},
        date={1983},
        ISSN={0003-4622},
     journal={Ann. Mat. Pura Appl. (4)},
      volume={135},
       pages={293\ndash 318 (1984)},
         url={http://dx.doi.org/10.1007/BF01781073},
      review={\MR{750538}},
}

\bib{Anz2}{misc}{
      author={Anzellotti, {G.}},
       title={Traces of bounded vector--fields and the divergence theorem},
        date={1983},
        note={Unpublished preprint},
}

\bib{BouDM}{article}{
      author={Bouchitt\'e, G.},
      author={Dal~Maso, G.},
       title={Integral representation and relaxation of convex local
  functionals on {${\rm BV}(\Omega)$}},
        date={1993},
        ISSN={0391-173X},
     journal={Ann. Scuola Norm. Sup. Pisa Cl. Sci. (4)},
      volume={20},
      number={4},
       pages={483\ndash 533},
         url={http://www.numdam.org/item?id=ASNSP_1993_4_20_4_483_0},
      review={\MR{1267597}},
}

\bib{Cas}{article}{
      author={Caselles, V.},
       title={On the entropy conditions for some flux limited diffusion
  equations},
        date={2011},
        ISSN={0022-0396},
     journal={J. Differential Equations},
      volume={250},
      number={8},
       pages={3311\ndash 3348},
         url={http://dx.doi.org/10.1016/j.jde.2011.01.027},
      review={\MR{2772392}},
}

\bib{ChenFrid}{article}{
      author={Chen, {G.-Q.}},
      author={Frid, {H.}},
       title={Divergence-measure fields and hyperbolic conservation laws},
        date={1999},
        ISSN={0003-9527},
     journal={Arch. Ration. Mech. Anal.},
      volume={147},
      number={2},
       pages={89\ndash 118},
         url={http://dx.doi.org/10.1007/s002050050146},
      review={\MR{1702637}},
}

\bib{ChFr1}{article}{
      author={Chen, {G.-Q.}},
      author={Frid, {H.}},
       title={Extended divergence-measure fields and the {E}uler equations for
  gas dynamics},
        date={2003},
        ISSN={0010-3616},
     journal={Comm. Math. Phys.},
      volume={236},
      number={2},
       pages={251\ndash 280},
         url={http://dx.doi.org/10.1007/s00220-003-0823-7},
      review={\MR{1981992}},
}

\bib{ChTo2}{article}{
      author={Chen, {G.-Q.}},
      author={Torres, {M.}},
       title={Divergence-measure fields, sets of finite perimeter, and
  conservation laws},
        date={2005},
        ISSN={0003-9527},
     journal={Arch. Ration. Mech. Anal.},
      volume={175},
      number={2},
       pages={245\ndash 267},
         url={http://dx.doi.org/10.1007/s00205-004-0346-1},
      review={\MR{2118477}},
}

\bib{ChTo}{article}{
      author={Chen, {G.-Q.}},
      author={Torres, {M.}},
       title={On the structure of solutions of nonlinear hyperbolic systems of
  conservation laws},
        date={2011},
        ISSN={1534-0392},
     journal={Commun. Pure Appl. Anal.},
      volume={10},
      number={4},
       pages={1011\ndash 1036},
         url={http://dx.doi.org/10.3934/cpaa.2011.10.1011},
      review={\MR{2787432 (2012c:35263)}},
}

\bib{ChToZi}{article}{
      author={Chen, {G.-Q.}},
      author={Torres, {M.}},
      author={Ziemer, {W.P.}},
       title={Gauss-{G}reen theorem for weakly differentiable vector fields,
  sets of finite perimeter, and balance laws},
        date={2009},
        ISSN={0010-3640},
     journal={Comm. Pure Appl. Math.},
      volume={62},
      number={2},
       pages={242\ndash 304},
         url={http://dx.doi.org/10.1002/cpa.20262},
      review={\MR{2468610}},
}

\bib{ComiPayne}{misc}{
      author={Comi, {G.E.}},
      author={Payne, {K.R.}},
       title={On locally essentially bounded divergence measure fields and sets
  of locally finite perimeter},
        date={2017},
        note={to appear in Adv.\ Calc.\ Var., DOI: 10.1515/acv-2017-0001},
}

\bib{CoTo}{article}{
      author={Comi, {G.E.}},
      author={Torres, {M.}},
       title={One--sided approximation of sets of finite perimeter},
        date={2017},
     journal={Atti Accad. Naz. Lincei Rend. Lincei Mat. Appl.},
      volume={28},
      number={1},
       pages={181\ndash 190},
}

\bib{CDC1}{article}{
      author={Crasta, {G.}},
      author={De~Cicco, {V.}},
       title={A chain rule formula in the space {$BV$} and applications to
  conservation laws},
        date={2011},
        ISSN={0036-1410},
     journal={SIAM J. Math. Anal.},
      volume={43},
      number={1},
       pages={430\ndash 456},
         url={http://dx.doi.org/10.1137/100804462},
      review={\MR{2765698}},
}

\bib{CDC3}{misc}{
      author={Crasta, {G.}},
      author={De~Cicco, {V.}},
       title={Anzellotti's pairing theory and the {G}auss--{G}reen theorem},
        date={2017},
        note={Preprint arXiv:1708.00792},
}

\bib{CDC2}{article}{
      author={Crasta, {G.}},
      author={De~Cicco, {V.}},
       title={On the chain rule formulas for divergences and applications to
  conservation laws},
        date={2017},
        ISSN={0362-546X},
     journal={Nonlinear Anal.},
      volume={153},
       pages={275\ndash 293},
         url={https://doi.org/10.1016/j.na.2016.10.005},
      review={\MR{3614672}},
}

\bib{CDD}{article}{
      author={Crasta, {G.}},
      author={De~Cicco, {V.}},
      author={De~Philippis, {G.}},
       title={Kinetic formulation and uniqueness for scalar conservation laws
  with discontinuous flux},
        date={2015},
        ISSN={0360-5302},
     journal={Comm. Partial Differential Equations},
      volume={40},
      number={4},
       pages={694\ndash 726},
         url={http://dx.doi.org/10.1080/03605302.2014.979998},
      review={\MR{3299353}},
}

\bib{CDDG}{article}{
      author={Crasta, {G.}},
      author={De~Cicco, {V.}},
      author={De~Philippis, {G.}},
      author={Ghiraldin, {F.}},
       title={Structure of solutions of multidimensional conservation laws with
  discontinuous flux and applications to uniqueness},
        date={2016},
        ISSN={0003-9527},
     journal={Arch. Ration. Mech. Anal.},
      volume={221},
      number={2},
       pages={961\ndash 985},
         url={https://doi.org/10.1007/s00205-016-0976-0},
      review={\MR{3488541}},
}

\bib{DCFV2}{article}{
      author={De~Cicco, V.},
      author={Fusco, N.},
      author={Verde, A.},
       title={A chain rule formula in {$BV$} and application to lower
  semicontinuity},
        date={2007},
        ISSN={0944-2669},
     journal={Calc. Var. Partial Differential Equations},
      volume={28},
      number={4},
       pages={427\ndash 447},
         url={http://dx.doi.org/10.1007/s00526-006-0048-7},
      review={\MR{MR2293980 (2007j:49016)}},
}

\bib{DCL}{article}{
      author={De~Cicco, V.},
      author={Leoni, G.},
       title={A chain rule in {$L^1({\rm div};\Omega)$} and its applications to
  lower semicontinuity},
        date={2004},
        ISSN={0944-2669},
     journal={Calc. Var. Partial Differential Equations},
      volume={19},
      number={1},
       pages={23\ndash 51},
         url={http://dx.doi.org/10.1007/s00526-003-0192-2},
      review={\MR{MR2027846 (2005c:49030)}},
}

\bib{FonLeo}{article}{
      author={Fonseca, {I.}},
      author={Leoni, {G.}},
       title={On lower semicontinuity and relaxation},
        date={2001},
        ISSN={0308-2105},
     journal={Proc. Roy. Soc. Edinburgh Sect. A},
      volume={131},
      number={3},
       pages={519\ndash 565},
         url={http://dx.doi.org/10.1017/S0308210500000998},
      review={\MR{1838501}},
}

\bib{FonLeoBook}{book}{
      author={Fonseca, {I.}},
      author={Leoni, {G.}},
       title={Modern methods in the calculus of variations: {$L^p$} spaces},
      series={Springer Monographs in Mathematics},
   publisher={Springer, New York},
        date={2007},
        ISBN={978-0-387-35784-3},
      review={\MR{2341508}},
}

\bib{K1}{article}{
      author={Kawohl, {B.}},
       title={On a family of torsional creep problems},
        date={1990},
     journal={J. Reine Angew. Math.},
      volume={410},
       pages={1\ndash 22},
}

\bib{LeoSar2}{misc}{
      author={Leonardi, {G.P.}},
      author={Saracco, {G.}},
       title={Rigidity and trace properties of divergence-measure vector
  fields},
        date={2017},
        note={Preprint},
}

\bib{LeoSar}{article}{
      author={Leonardi, {G.P.}},
      author={Saracco, {G.}},
       title={The prescribed mean curvature equation in weakly regular
  domains},
        date={2018},
        ISSN={1021-9722},
     journal={NoDEA Nonlinear Differential Equations Appl.},
      volume={25},
      number={2},
       pages={25:9},
         url={https://doi.org/10.1007/s00030-018-0500-3},
      review={\MR{3767675}},
}

\bib{SchSch}{article}{
      author={Scheven, {C.}},
      author={Schmidt, {T.}},
       title={B{V} supersolutions to equations of 1-{L}aplace and minimal
  surface type},
        date={2016},
        ISSN={0022-0396},
     journal={J. Differential Equations},
      volume={261},
      number={3},
       pages={1904\ndash 1932},
         url={http://dx.doi.org/10.1016/j.jde.2016.04.015},
      review={\MR{3501836}},
}

\bib{SchSch2}{misc}{
      author={Scheven, {C.}},
      author={Schmidt, {T.}},
       title={An {A}nzellotti type pairing for divergence-measure fields and a
  notion of weakly super-1-harmonic functions},
        date={2017},
        note={Preprint},
}

\bib{Zi}{book}{
      author={Ziemer, {W.P.}},
       title={Weakly differentiable functions},
   publisher={Springer--Verlag},
     address={New York},
        date={1989},
}

\end{biblist}
\end{bibdiv}

\end{document}